\documentclass[12pt]{amsart}
\usepackage[T1]{fontenc}
\usepackage{geometry}
\geometry{verbose,tmargin=1in,bmargin=1in,lmargin=1in,rmargin=1in}
\usepackage{color}
\usepackage{multirow}
\usepackage{amstext}
\usepackage{amsthm}
\usepackage{amssymb}
\usepackage{stmaryrd}
\usepackage{stackrel}
\usepackage{graphicx}
\usepackage{wasysym}
\usepackage{relsize}

\makeatletter

\providecommand{\tabularnewline}{\\}

\numberwithin{equation}{section}
\numberwithin{figure}{section}
\numberwithin{table}{section}
\theoremstyle{definition}
\newtheorem{thm}{\protect\theoremname}[section]
\theoremstyle{definition}
\newtheorem{prop}[thm]{\protect\propositionname}
\theoremstyle{definition}

\theoremstyle{definition}
\newtheorem{cor}[thm]{\protect\corollaryname}
\theoremstyle{definition}
\newtheorem{example}[thm]{\protect\examplename}
\theoremstyle{remark}
\newtheorem{rem}[thm]{\protect\remarkname}

\usepackage{graphicx,cases,mathtools}

\setlength{\jot}{0pt} 

\usepackage{enumitem}
\setlist{itemsep=0pt,topsep=0pt,parsep=1pt,partopsep=0pt}

\usepackage{tikz}
\usetikzlibrary{arrows,shapes,positioning,backgrounds,calc}
\usetikzlibrary{arrows}
\usetikzlibrary{shapes} 
\usetikzlibrary{patterns}
\tikzstyle{vert} = 
[circle, draw, inner sep=0pt, minimum size=3.6mm,text=cyan]
\tikzstyle{select} = 
[circle, draw, line width=.18mm,fill=black,inner sep=0pt, minimum size=1.5mm,text=cyan]
\tikzstyle{unselect} = 
[circle, draw, line width=.18mm,,fill=white, inner sep=0pt, minimum size=1.5mm,text=cyan]
\tikzstyle{small vert} = 
[circle, draw,fill=grey!40, inner sep=0pt, minimum size=3.75mm]
\tikzstyle{tiny vert} = 
[circle, draw,fill=grey!40, inner sep=1.6pt, minimum size=1mm]
\tikzstyle{rect vert} = 
[rectangle, rounded corners, draw, fill=grey!40,inner sep=2.7pt, minimum size=4mm]
\tikzstyle{b} = [draw, very thick, black,-]
\tikzstyle{d} = [draw, black,-stealth]
\tikzstyle{a} = [draw, black,-stealth]
\definecolor{grey}{rgb}{.7, .7, .7}

\definecolor{orange}{RGB}{255,102,0}
\definecolor{ggreen}{RGB}{0,153,0}
\definecolor{darkblue}{RGB}{0,0,255}
\definecolor{purple}{RGB}{153,51,255}
\definecolor{turq}{RGB}{72,209,204}
\definecolor{gray}{RGB}{220,220,220}
\definecolor{orange2}{RGB}{255,100,0}
\definecolor{purple2}{RGB}{159,51,250}
\definecolor{rred}{rgb}{0.9, 0.17, 0.31}
\definecolor{naugreen}{cmyk}{.43,0,.34,.38}
\definecolor{naublue}{cmyk}{1,.72,0,.32}
\definecolor{mediterranean}{cmyk}{.67,0,.08,.3}
\definecolor{rose}{cmyk}{0,1.00,.20,0}
\definecolor{darkorchid}{cmyk}{.6,.9,0,.05}
\definecolor{butterfly}{cmyk}{.95,.59,0,.10}
\definecolor{springgreen}{cmyk}{1.00,0,.70,.02}
\definecolor{darkred}{cmyk}{0,1,1,.5}
\definecolor{nectarine}{cmyk}{0,0.70,1.00,0}
\definecolor{icyblue}{cmyk}{.84,.25,0,.06}
\definecolor{manatee}{rgb}{0.59, 0.6, 0.67}


\usepackage{tikz-3dplot}

\renewcommand*\env@cases[1][1]{%
  \let\@ifnextchar\new@ifnextchar
  \left\lbrace
  \def\arraystretch{#1}%
  \array{@{}l@{\quad}l@{}}%
}

\newcommand{\DD}{\mathsf{D}}

\newcommand{\GG}{\mathsf{G}}
\newcommand{\HH}{\mathsf{H}}
\newcommand{\delay}{\mbox{\larger[-.75]$\sqcap$}}

\makeatother

\providecommand{\corollaryname}{Corollary}
\providecommand{\examplename}{Example}
\providecommand{\propositionname}{Proposition}
\providecommand{\questionname}{Question}
\providecommand{\remarkname}{Remark}
\providecommand{\theoremname}{Theorem}

\begin{document}
\global\long\def\GEN{\text{GEN}}%
\global\long\def\DNG{\text{DNG}}%
\global\long\def\TER{\text{TER}}%
\global\long\def\DNT{\text{DNT}}%
\global\long\def\Dzero{\mathcal{D}_{G,0}}%
\global\long\def\calA{\mathcal{A}}%
\global\long\def\Conv{\text{Conv}}%
\global\long\def\Ex{\text{Ex}}%
\global\long\def\Int{\text{Int}}%
\global\long\def\Opt{\text{Opt}}%
\global\long\def\mex{\text{mex}}%
\global\long\def\nim{\text{nim}}%
\global\long\def\pty{\text{pty}}%
\global\long\def\min{\text{min}}%
\global\long\def\nOpt{\text{nOpt}}%
\global\long\def\type{\text{type}}%

\newcommand{\boxprod}{\mathop{\textstyle\mathsmaller{\square}}}

\title{IMPARTIAL GEODETIC REMOVING GAMES ON GRID GRAPHS}

\author{Bret J.~Benesh}
\address{
Department of Mathematics,
College of Saint Benedict and Saint John's University,
37 College Avenue South,
Saint Joseph, MN 56374-5011, USA
}
\email{bbenesh@csbsju.edu}

\author{Dana C.~Ernst}
\address{
Department of Mathematics and Statistics,
Northern Arizona University PO Box 5717,
Flagstaff, AZ 86011-5717, USA
}
\email{Dana.Ernst@nau.edu, Nandor.Sieben@nau.edu}

\author{Marie Meyer}
\address{
Department of Engineering, Computing, and Mathematical Sciences,
Lewis University,
1 University Pkwy,
Romeoville, IL 60446, USA
}
\email{mmeyer2@lewisu.edu}

\author{Sarah K.~Salmon}
\address{Department of Mathematics,
University of Colorado Boulder Campus Box 395,
2300 Colorado Avenue, Boulder, CO 80309, USA}
\email{Sarah.Salmon@colorado.edu}

\author{N\'andor Sieben}

\subjclass[2020]{91A46, 91A43, 52A01, 52B40}
\keywords{convex hull, hypergraph game, option preserving map, delayed gamegraph}
\thanks{Date: \the\month/\the\day/\the\year}

\begin{abstract}
A subset of the vertex set of a graph is geodetically convex if it contains every vertex on
any shortest path between two elements of the subset. The convex hull of a set of vertices is the smallest convex set containing the set. We study two games in which two players take turns selecting vertices of a graph until the convex hull of the remaining unselected vertices is too small. The last player to move is the winner. The achievement game ends when the convex hull of the unselected vertices does not contain every vertex in the graph. In the avoidance game, the convex hull of the remaining vertices must contain every vertex. We determine the nim-number of these games for the family of grid graphs. We also provide some results for lattice graphs. Key tools in this analysis are delayed gamegraphs, option preserving maps, and case analysis diagrams.
\end{abstract}

\maketitle

\section{Introduction}

The geodetic closure of a vertex subset $S$ of a finite simple graph is the collection of vertices contained on shortest paths between pairs of elements of $S$. Harary~\cite{HARARY1984323}, followed by Buckley and Harary~\cite{BuckleyHarary86}, introduced two impartial geodetic closure games on graphs based on this concept. In both games, two players take turns selecting previously-unselected vertices, with the geodetic closure of the accumulated set being recalculated after each move. In the achievement game, the player that causes the geodetic closure to equal the entire vertex set wins. In contrast, in the avoidance game, a player loses if they are forced to select a vertex that results in the geodetic closure equaling the full vertex set.  These games were analyzed on several well-known graphs, such as cycles, complete graphs, wheel and generalized wheel graphs, complete bipartite graphs, complete multipartite graphs, hypercubes, the Petersen graph, coronas, complete block graphs, and split graphs~\cite{BuckleyHarary85,BuckleyHarary86,FraenkelHarary89,HaynesHenningTiller,Necascova,Wang17}. 

In~\cite{BeneshGeodetic}, we investigated a variation of the geodetic closure games that uses the convex hull instead of the geodetic closure. A subset of vertices of a finite graph is said to be (geodetically) convex if it contains every vertex along shortest paths connecting vertices in the subset. The convex hull of a subset $S$ of vertices is the smallest convex set containing $S$. The difference between the two operators is that the convex hull iteratively applies the geodetic closure until the set stabilizes. Note that the convex hull is a closure operator while the geodetic closure is not, despite its name. The convex hull and geodetic closure operators are the same for many families of graphs while they are different on others such as hypercube graphs, complete multipartite graphs, and generalized wheel graphs.  While~\cite{BuckleyHarary85,BuckleyHarary86,FraenkelHarary89,HaynesHenningTiller} only determine the outcome of geodetic closure games, we computed the nim-values, and hence the outcomes, of the convex hull games for several families of graphs. Of course, this implies that we simultaneously determined the nim-values for the geodetic closure games in instances where the convex hull games and geodetic closure games are the same.


The geodetic closure and convex hull games described above are building hypergraph games~\cite{hypergraph}. Removing hypergraph games are a complementary flavor of hypergraph games. In removing convex hull hypergraph games, players take turns selecting vertices of a finite graph until the convex hull of the remaining unselected vertices becomes too small. The achievement game ends when the convex hull of the unselected vertices does not equal the vertex set of the graph. In the avoidance game, the convex hull of the remaining vertices must equal the vertex set of the graph after each player makes a move. In this paper, we analyze these games for lattice graphs.  

Our method of attack involves the introduction of achievement and avoidance removing games played on a $3\times 3$ matrix.  Each removing game played on a grid graph maps to a matrix game under an option preserving map~\cite{Misha,Basic2024,Li}. Since option preserving maps preserve nim-values \cite{Basic2024}, we can study the original removing games played on grid graphs by analyzing the corresponding simpler matrix games instead.

For a comprehensive treatment of the standard theory of impartial games, we suggest that the reader consult~\cite{albert2007lessons,ONAG,SiegelBook}.

\section{Preliminaries}

In this section, we recall some terminology and results that we rely on.

\subsection{Convex hull}

Let $\Gamma=(V,E)$ be a simple graph with a nonempty vertex set $V$. A \emph{geodesic} of $\Gamma$ is a shortest path between two vertices. A set $K$ of vertices is \emph{geodetically convex}, or simply \emph{convex}, if it contains every vertex along geodesics connecting vertices in $K$. The \emph{convex hull} of a vertex set $S$ is the intersection $[S]:=\bigcap\{K\mid S\subseteq K \text{ and } K  \text{ is convex}\}$ of all convex sets containing $S$. A comprehensive reference about geodetic convexity is~\cite{PelayoBook}.

Let $P_n$ denote the \emph{path graph} with $n$ vertices. Box product graphs of the form $P_{n_1}\Box\cdots\Box P_{n_d}$ with $2\leq n_1\leq \cdots \leq n_d\geq 3$ are called ($d$-dimensional) \emph{lattice graphs}.  
A 2-dimensional lattice graph $P_{m}\Box P_{n}$ is called a \emph{grid graph}. Figure~\ref{fig:embed} depicts the natural embeddings of $P_{3}\Box P_{4}$ and $P_{2}\Box P_{2}\Box P_3$ into $\mathbb{R}^2$ and $\mathbb{R}^3$, respectively.

\begin{figure}[h!]
\begin{tikzpicture}[scale=.3]
\draw (1,1) grid (4,3);
\node at (1,3) [unselect] {};
\node at (2,3) [unselect] {};
\node at (3,3) [unselect] {};
\node at (4,3) [unselect] {};

\node at (1,2) [unselect] {};
\node at (2,2) [unselect] {};
\node at (3,2) [unselect] {};
\node at (4,2) [unselect] {};

\node at (1,1) [unselect] {};
\node at (2,1) [unselect] {};
\node at (3,1) [unselect] {};
\node at (4,1) [unselect] {};
\end{tikzpicture}
\hfil
\tdplotsetmaincoords{70}{30}
\begin{tikzpicture}[
    tdplot_main_coords,
    scale=.5, 
    vertex/.style={circle, draw, inner sep=0pt, minimum size=1.45mm}
  ]
  \foreach \k in {0,1,2} { 
    \foreach \j in {0,1} { 
      \foreach \i in {0, 1} { 
       \node[vertex] (v-\i-\j-\k) at (\i, \j, \k) {}; 
      }
    }
  }
  \foreach \k in {0,1,2} {
    \foreach \j in {0,1} {
      \draw (v-0-\j-\k) -- (v-1-\j-\k);
    }
  }
  \foreach \k in {0,1,2} {
    \foreach \i in {0,1} {
      \draw (v-\i-0-\k) -- (v-\i-1-\k);
    }
  }
  \foreach \j in {0,1} {
    \foreach \i in {0,1} {
      \draw (v-\i-\j-0) -- (v-\i-\j-1);
      \draw (v-\i-\j-1) -- (v-\i-\j-2);
    }
  }
\end{tikzpicture}
\caption{\label{fig:embed}
The graphs $P_{3}\Box P_{4}$ and $P_{2}\Box P_{2}\Box P_3$.
}
\end{figure}

For simplicity we sometimes omit drawing the edges in figures showing grid graphs.

\begin{example}
Figure~\ref{fig:hull} shows three examples of convex hulls $[S]$ of subsets $S\subseteq V$ of the grid graph $P_3\Box P_5$. The elements of $S$ are denoted by $\circ$. The convex vertex sets of $P_m\Box P_n$ are products of a path in $P_m$ and a path in $P_n$. These occupy rectangular regions on the figures. In fact, $[S]$ occupies the smallest rectangular region containing $S$. On the third example $[S]=V$  because $S$ contains a vertex on each edge (top, bottom, left, right) of the grid. This $S$ is minimal with this property.
\end{example}

\begin{figure}[h!]
\begin{tikzpicture}[scale=.3]
\draw[rounded corners,color=black,fill=yellow!0] (.5,.5)--(.5,2.5)--(3.5,2.5)--(3.5,.5)-- cycle;
\node at (0,0) {$\bullet$};
\node at (0,1) {$\bullet$};
\node at (0,2) {$\bullet$};
\node at (1,0) {$\bullet$};
\node at (1,1) {$\circ$};
\node at (1,2) {$\bullet$};
\node at (2,0) {$\bullet$};
\node at (2,1) {$\bullet$};
\node at (2,2) {$\bullet$};
\node at (3,0) {$\bullet$};
\node at (3,1) {$\bullet$};
\node at (3,2) {$\circ$};
\node at (4,0) {$\bullet$};
\node at (4,1) {$\bullet$};
\node at (4,2) {$\bullet$};
\end{tikzpicture}
\hfil 
\begin{tikzpicture}[scale=.3]
\draw[rounded corners,color=black,fill=yellow!0] (-.5,-.5)--(-.5,1.5)--(4.5,1.5)--(4.5,-.5)-- cycle;
\node at (0,0) {$\circ$};
\node at (0,1) {$\bullet$};
\node at (0,2) {$\bullet$};
\node at (1,0) {$\bullet$};
\node at (1,1) {$\circ$};
\node at (1,2) {$\bullet$};
\node at (2,0) {$\circ$};
\node at (2,1) {$\bullet$};
\node at (2,2) {$\bullet$};
\node at (3,0) {$\bullet$};
\node at (3,1) {$\bullet$};
\node at (3,2) {$\bullet$};
\node at (4,0) {$\bullet$};
\node at (4,1) {$\circ$};
\node at (4,2) {$\bullet$};
\end{tikzpicture}
\hfil 
\begin{tikzpicture}[scale=.3]
\draw[rounded corners,color=black,fill=yellow!0] (-.5,-.5)--(-.5,2.5)--(4.5,2.5)--(4.5,-.5)-- cycle;
\node at (0,0) {$\bullet$};
\node at (0,1) {$\bullet$};
\node at (0,2) {$\circ$};
\node at (1,0) {$\bullet$};
\node at (1,1) {$\bullet$};
\node at (1,2) {$\bullet$};
\node at (2,0) {$\circ$};
\node at (2,1) {$\bullet$};
\node at (2,2) {$\bullet$};
\node at (3,0) {$\bullet$};
\node at (3,1) {$\bullet$};
\node at (3,2) {$\bullet$};
\node at (4,0) {$\bullet$};
\node at (4,1) {$\circ$};
\node at (4,2) {$\bullet$};
\end{tikzpicture}
\caption{\label{fig:hull}
Convex hulls of subsets of $P_3\Box P_5$.}
\end{figure}


What we observed in the previous example generalizes to higher dimensions. The convex hull $[S]$ of a vertex subset $S$ of a $d$-dimensional lattice graph $\Gamma$ is the whole vertex set $V$ if and only if $S$ contains a vertex on every face of the natural embedding of $\Gamma$ in $\mathbb{R}^d$ as a polytope. 

\begin{example}
The convex hull $[S]$ of the set $S$ of vertices denoted by $\circ$ is the full vertex set $V$ for each graph in Figure~\ref{fig:3DExa}. Each $S$ is minimal with this property. Notice that in the last graph not every vertex is on a geodesic between two vertices of $S$. 
\end{example}

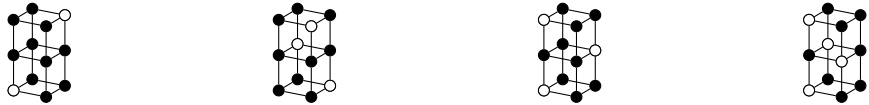
\begin{figure}[h!]

\tdplotsetmaincoords{70}{30}
\begin{tikzpicture}[
    tdplot_main_coords,
    scale=.5, 
    vertex/.style={circle, draw, inner sep=0pt, minimum size=1.45mm}
  ]
  \foreach \k in {0,1,2} { 
    \foreach \j in {0,1} { 
      \foreach \i in {0, 1} { 
       \node[vertex,fill=black] (v-\i-\j-\k) at (\i, \j, \k) {}; 
      }
    }
  }
  \node[vertex,fill=white] at (0,0,0) {};
  \node[vertex,fill=white] at (1,1,2) {};
  \foreach \k in {0,1,2} {
    \foreach \j in {0,1} {
      \draw (v-0-\j-\k) -- (v-1-\j-\k);
    }
  }
  \foreach \k in {0,1,2} {
    \foreach \i in {0,1} {
      \draw (v-\i-0-\k) -- (v-\i-1-\k);
    }
  }
  \foreach \j in {0,1} {
    \foreach \i in {0,1} {
      \draw (v-\i-\j-0) -- (v-\i-\j-1);
      \draw (v-\i-\j-1) -- (v-\i-\j-2);
    }
  }
\end{tikzpicture}
\hfil
\begin{tikzpicture}[
    tdplot_main_coords,
    scale=.5, 
    vertex/.style={circle, draw, inner sep=0pt, minimum size=1.45mm}
  ]
  \foreach \k in {0,1,2} { 
    \foreach \j in {0,1} { 
      \foreach \i in {0, 1} { 
       \node[vertex,fill=black] (v-\i-\j-\k) at (\i, \j, \k) {}; 
      }
    }
  }
  \node[vertex,fill=white] at (1,0,2) {};
  \node[vertex,fill=white] at (1,1,0) {};
  \node[vertex,fill=white] at (0,1,1) {};
  \foreach \k in {0,1,2} {
    \foreach \j in {0,1} {
      \draw (v-0-\j-\k) -- (v-1-\j-\k);
    }
  }
  \foreach \k in {0,1,2} {
    \foreach \i in {0,1} {
      \draw (v-\i-0-\k) -- (v-\i-1-\k);
    }
  }
  \foreach \j in {0,1} {
    \foreach \i in {0,1} {
      \draw (v-\i-\j-0) -- (v-\i-\j-1);
      \draw (v-\i-\j-1) -- (v-\i-\j-2);
    }
  }
\end{tikzpicture}
\hfil
\begin{tikzpicture}[
    tdplot_main_coords,
    scale=.5, 
    vertex/.style={circle, draw, inner sep=0pt, minimum size=1.45mm}
  ]
  \foreach \k in {0,1,2} { 
    \foreach \j in {0,1} { 
      \foreach \i in {0, 1} { 
       \node[vertex,fill=black] (v-\i-\j-\k) at (\i, \j, \k) {}; 
      }
    }
  }
  \node[vertex,fill=white] at (0,0,2) {};
  \node[vertex,fill=white] at (0,0,0) {};
  \node[vertex,fill=white] at (1,1,1) {};
  \foreach \k in {0,1,2} {
    \foreach \j in {0,1} {
      \draw (v-0-\j-\k) -- (v-1-\j-\k);
    }
  }
  \foreach \k in {0,1,2} {
    \foreach \i in {0,1} {
      \draw (v-\i-0-\k) -- (v-\i-1-\k);
    }
  }
  \foreach \j in {0,1} {
    \foreach \i in {0,1} {
      \draw (v-\i-\j-0) -- (v-\i-\j-1);
      \draw (v-\i-\j-1) -- (v-\i-\j-2);
    }
  }
\end{tikzpicture}
\hfil
\begin{tikzpicture}[
    tdplot_main_coords,
    scale=.5, 
    vertex/.style={circle, draw, inner sep=0pt, minimum size=1.45mm}
  ]
  \foreach \k in {0,1,2} { 
    \foreach \j in {0,1} { 
      \foreach \i in {0, 1} { 
       \node[vertex,fill=black] (v-\i-\j-\k) at (\i, \j, \k) {}; 
      }
    }
  }
  \node[vertex,fill=white] at (0,0,0) {};
  \node[vertex,fill=white] at (0,0,2) {};
  \node[vertex,fill=white] at (0,1,1) {};
  \node[vertex,fill=white] at (1,0,1) {};
  \foreach \k in {0,1,2} {
    \foreach \j in {0,1} {
      \draw (v-0-\j-\k) -- (v-1-\j-\k);
    }
  }
  \foreach \k in {0,1,2} {
    \foreach \i in {0,1} {
      \draw (v-\i-0-\k) -- (v-\i-1-\k);
    }
  }
  \foreach \j in {0,1} {
    \foreach \i in {0,1} {
      \draw (v-\i-\j-0) -- (v-\i-\j-1);
      \draw (v-\i-\j-1) -- (v-\i-\j-2);
    }
  }
\end{tikzpicture}

\caption{\label{fig:3DExa}
Four minimal vertex subsets $S$ of $P_2\Box P_2\Box P_3$ satisfying $[S]=V$.}
\end{figure}

\subsection{Geodetic removing games}

We study two removing hypergraph games on $\Gamma$, where two players take turns selecting previously-unselected vertices  until certain conditions are met. A position in both games is the set $P$ of jointly selected vertices. The achievement game \emph{terminate} $\TER(\Gamma)$ ends as soon as the convex hull of the unchosen vertices is no longer $V$, that is, $[V\setminus P]\ne V$. The avoidance game \emph{do not terminate} $\DNT(\Gamma)$ ends as soon as there is no  choice of vertex to select such that the convex hull of the unchosen vertices is still $V$. In other words, $[V \setminus P]=V$ for each position $P$ in this game. We use normal play convention, so the last player to move wins. Removing and building hypergraph games were studied in~\cite{hypergraph}. The building versions of our geodetic convex hull games were studied in \cite{BeneshGeodetic}. Other versions of these games can be found in \cite{BuckleyHarary86,HaynesHenningTiller}.

\subsection{Gamegraphs}

We often use digraphs to model impartial combinatorial games, where the vertices are positions of the game and arrows represent possible moves between positions. More formally, an \emph{optiongraph} is a nonempty set $\DD$ of \emph{positions} together with an \emph{option function} $\Opt_{\DD}:\DD\to 2^\DD$ \cite{Misha,Basic2024}. A \emph{play} from position $p$ to position $q$ is a sequence $p=p_0,p_1,\ldots,p_k=q$ such that $p_{i+1}\in\Opt_\DD(p_i)$ for all $i$. In this case we say that $q$ is a \emph{subposition} of $p$. A play is \emph{cyclic} if a position occurs in it more than once. A finite optiongraph is a (finite) \emph{gamegraph} $\GG$ if it has only acyclic plays and every position is a subposition of a specific position called the \emph{starting position}. Note that the starting position is the unique position that is not an option of any other position. 
The nim-value of $\GG$ is the nim-value of this unique starting position.

The option function $\Opt_{\GG}$, or simply $\Opt$, encodes the same information as the arrow set $\{(p,q)\mid q\in\Opt(p)\}$ of a digraph whose vertices are the positions. In this digraph the starting position is the unique source vertex and there are walks from a position to its subpositions. 

The advantage of the option function over the arrow set is the following.
A function $\beta:\GG\to\HH$ between gamegraphs is called \emph{option preserving} if $\Opt_\HH(\beta(p))=\beta(\Opt_\GG(p))$ for all $p\in\GG$~\cite{Basic2024,Li}.  If $\beta$ is an option preserving gamegraph map, then $\nim(\beta(p))=\nim(p)$ for each $p\in\GG$ \cite[Corollary~4.18]{Basic2024}. This implies that $\GG$ and $\beta(\GG)$ have the same nim-value and hence outcome under normal play.

The \emph{disjunctive sum} $\GG\Box\HH$ of two gamegraphs $\GG$ and $\HH$ is the gamegraph with positions $\GG\times \HH$ and option function $\Opt_{\GG\Box\HH}(p,q):=(\Opt_\GG(p)\times\{q\})\cup(\{p\}\times\Opt_\HH(q))$.
In terms of digraphs, $\GG\Box \HH$ is the box product of $\GG$ and $\HH$. In each turn a player makes a valid move either in $\GG$ or in $\HH$. The nim-number of $\GG\Box\HH$ can be computed as the \emph{nim-sum} 
\[
\nim(\mathsf{G}\Box\mathsf{H})=\nim(\mathsf{G})\oplus\nim(\mathsf{H}),
\]
which requires binary addition without carry by the Sprague--Grundy Theorem.

\begin{example}
Figure~\ref{fig:exa} shows a representative quotient gamegraph for $\DNT(P_2\Box P_3)$. In this quotient, we identified geometrically congruent positions. Each equivalence class is shown with a representative. It is easy to see that the canonical quotient map from the original gamegraph to this quotient gamegraph is option preserving and hence nim-value preserving.  We have labeled positions with their nim-values. 
\end{example}

\begin{figure}[h!]
\begin{tikzpicture}[xscale=1.6,yscale=1]
\node[inner sep=0pt,label=left:{\scriptsize $\color{cyan}{0}$}] (top) at (3,5) {
\begin{tikzpicture}[scale=.3,auto]
\draw (1,1) grid (3,2);
\node (1) at (1,2) [unselect] {};
\node (2) at (2,2) [unselect] {};
\node (3) at (3,2) [unselect] {};
\node (4) at (1,1) [unselect] {};
\node (5) at (2,1) [unselect] {};
\node (5) at (3,1) [unselect] {};
\end{tikzpicture}
};

\node[inner sep=0pt,label=left:{\scriptsize $\color{cyan}{3}$}] (1a) at (2,4) {
\begin{tikzpicture}[scale=.3,auto]
\draw (1,1) grid (3,2);
\node (1) at (1,2) [select] {};
\node (2) at (2,2) [unselect] {};
\node (3) at (3,2) [unselect] {};
\node (4) at (1,1) [unselect] {};
\node (5) at (2,1) [unselect] {};
\node (6) at (3,1) [unselect] {};
\end{tikzpicture}
};

\node[inner sep=0pt,label=left:{\scriptsize $\color{cyan}{1}$}] (1b) at (4,4) {
\begin{tikzpicture}[scale=.3,auto]
\draw (1,1) grid (3,2);
\node (1) at (1,2) [unselect] {};
\node (2) at (2,2) [select] {};
\node (3) at (3,2) [unselect] {};
\node (4) at (1,1) [unselect] {};
\node (5) at (2,1) [unselect] {};
\node (6) at (3,1) [unselect] {};
\end{tikzpicture}
};

\node[inner sep=0pt,label=left:{\scriptsize $\color{cyan}{0}$}] (2a) at (2,3) {
\begin{tikzpicture}[scale=.3,auto]
\draw (1,1) grid (3,2);
\node (1) at (1,2) [select] {};
\node (2) at (2,2) [unselect] {};
\node (3) at (3,2) [unselect] {};
\node (4) at (1,1) [unselect] {};
\node (5) at (2,1) [unselect] {};
\node (6) at (3,1) [select] {};
\end{tikzpicture}
};

\node[inner sep=0pt,label=left:{\scriptsize $\color{cyan}{1}$}] (2b) at (1,3) {
\begin{tikzpicture}[scale=.3,auto]
\draw (1,1) grid (3,2);
\node (1) at (1,2) [select] {};
\node (2) at (2,2) [unselect] {};
\node (3) at (3,2) [select] {};
\node (4) at (1,1) [unselect] {};
\node (5) at (2,1) [unselect] {};
\node (6) at (3,1) [unselect] {};
\end{tikzpicture}
};

\node[inner sep=0pt,label=left:{\scriptsize $\color{cyan}{0}$}] (2c) at (4,3) {
\begin{tikzpicture}[scale=.3,auto]
\draw (1,1) grid (3,2);
\node (1) at (1,2) [select] {};
\node (2) at (2,2) [select] {};
\node (3) at (3,2) [unselect] {};
\node (4) at (1,1) [unselect] {};
\node (5) at (2,1) [unselect] {};
\node (6) at (3,1) [unselect] {};
\end{tikzpicture}
};

\node[inner sep=0pt,label=left:{\scriptsize $\color{cyan}{2}$}] (2d) at (3,3) {
\begin{tikzpicture}[scale=.3,auto]
\draw (1,1) grid (3,2);
\node (1) at (1,2) [select] {};
\node (2) at (2,2) [unselect] {};
\node (3) at (3,2) [unselect] {};
\node (4) at (1,1) [unselect] {};
\node (5) at (2,1) [select] {};
\node (6) at (3,1) [unselect] {};
\end{tikzpicture}
};

\node[inner sep=0pt,label=left:{\scriptsize $\color{cyan}{0}$}] (2e) at (5,3) {
\begin{tikzpicture}[scale=.3,auto]
\draw (1,1) grid (3,2);
\node (1) at (1,2) [unselect] {};
\node (2) at (2,2) [select] {};
\node (3) at (3,2) [unselect] {};
\node (4) at (1,1) [unselect] {};
\node (5) at (2,1) [select] {};
\node (6) at (3,1) [unselect] {};
\end{tikzpicture}
};

\node[inner sep=0pt,label=left:{\scriptsize $\color{cyan}{1}$}] (3a) at (3,2) {
\begin{tikzpicture}[scale=.3,auto]
\draw (1,1) grid (3,2);
\node (1) at (1,2) [select] {};
\node (2) at (2,2) [select] {};
\node (3) at (3,2) [unselect] {};
\node (4) at (1,1) [unselect] {};
\node (5) at (2,1) [unselect] {};
\node (6) at (3,1) [select] {};
\end{tikzpicture}
};

\node[inner sep=0pt,label=left:{\scriptsize $\color{cyan}{0}$}] (3b) at (2,2) {
\begin{tikzpicture}[scale=.3,auto]
\draw (1,1) grid (3,2);
\node (1) at (1,2) [select] {};
\node (2) at (2,2) [unselect] {};
\node (3) at (3,2) [select] {};
\node (4) at (1,1) [unselect] {};
\node (5) at (2,1) [select] {};
\node (6) at (3,1) [unselect] {};
\end{tikzpicture}
};

\node[inner sep=0pt,label=left:{\scriptsize $\color{cyan}{1}$}] (3c) at (4,2) {
\begin{tikzpicture}[scale=.3,auto]
\draw (1,1) grid (3,2);
\node (1) at (1,2) [select] {};
\node (2) at (2,2) [select] {};
\node (3) at (3,2) [unselect] {};
\node (4) at (1,1) [unselect] {};
\node (5) at (2,1) [select] {};
\node (6) at (3,1) [unselect] {};
\end{tikzpicture}
};

\node[inner sep=0pt,label=left:{\scriptsize $\color{cyan}{0}$}] (4a) at (3,1) {
\begin{tikzpicture}[scale=.3,auto]
\draw (1,1) grid (3,2);
\node (1) at (1,2) [select] {};
\node (2) at (2,2) [select] {};
\node (3) at (3,2) [unselect] {};
\node (4) at (1,1) [unselect] {};
\node (5) at (2,1) [select] {};
\node (6) at (3,1) [select] {};
\end{tikzpicture}
};

\path [d, shorten >=1.5pt, shorten <=1.5pt] (top) to (1a);
\path [d, shorten >=1.5pt, shorten <=1.5pt] (top) to (1b);
\path [d, shorten >=1.5pt, shorten <=1.5pt] (1a) to (2a);
\path [d, shorten >=1.5pt, shorten <=1.5pt] (1a) to (2b);
\path [d, shorten >=1.5pt, shorten <=1.5pt] (1a) to (2c);
\path [d, shorten >=1.5pt, shorten <=1.5pt] (1a) to (2d);
\path [d, shorten >=1.5pt, shorten <=1.5pt] (1b) to (2c);
\path [d, shorten >=1.5pt, shorten <=1.5pt] (1b) to (2d);
\path [d, shorten >=1.5pt, shorten <=1.5pt] (1b) to (2e);
\path [d, shorten >=1.5pt, shorten <=1.5pt] (2a) to (3a);
\path [d, shorten >=1.5pt, shorten <=1.5pt] (2b) to (3b);
\path [d, shorten >=1.5pt, shorten <=1.5pt] (2c) to (3a);
\path [d, shorten >=1.5pt, shorten <=1.5pt] (2d) to (3a);
\path [d, shorten >=1.5pt, shorten <=1.5pt] (2c) to (3c);
\path [d, shorten >=1.5pt, shorten <=1.5pt] (2d) to (3b);
\path [d, shorten >=1.5pt, shorten <=1.5pt] (2d) to (3c);
\path [d, shorten >=1.5pt, shorten <=1.5pt] (2e) to (3c);
\path [d, shorten >=1.5pt, shorten <=1.5pt] (3a) to (4a);
\path [d, shorten >=1.5pt, shorten <=1.5pt] (3c) to (4a);
\end{tikzpicture}

\caption{
\label{fig:exa}
Representative quotient gamegraph for $\DNT(P_2\Box P_3)$. Vertices depicted by $\bullet$ are already selected, while vertices depicted by $\circ$ are still unselected.
}
\end{figure}
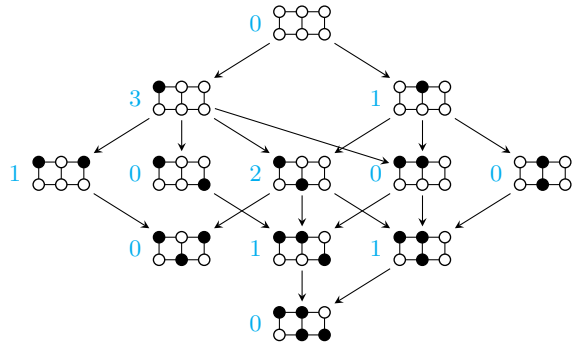

\section{Delay moves}

Let $\mathsf{G}$ be a gamegraph and $\mathsf{D}_{k}$ be the directed
path with positions $k,k-1,\ldots,0$ and option function satisfying $\Opt(i)=\{i-1\}$ for $i\in\{1,\ldots,k\}$ and $\Opt(0)=\emptyset$. The \emph{delayed gamegraph} $\mathsf{G}{\delay}\mathsf{D}_{k}$ has positions $\mathsf{G}\times \mathsf{D}_{k}$.
In this gamegraph, $(q,r)\in\Opt(p,r)$ if and only if $q\in\Opt(p)$,
while $(p,s)\in\Opt(p,r)$ if and only if $s\in\Opt(r)$ and $p$
is not terminal in $\mathsf{G}$. Roughly speaking, this means that
the delayed gamegraph is $\mathsf{G}\Box\mathsf{D}_{k}$
with arrows connecting terminal positions of $\mathsf{G}$ removed.

\begin{example}
Figure~\ref{fig:PvsD} shows an example of the sum and the delayed gamegraphs. The positions are labeled with their nim-values.
\end{example}

\begin{figure}[h!]
\begin{tabular}{cccc}
\begin{tikzpicture}[xscale=.9, yscale=.7,baseline=-27]
\node[vert] (2)  at (1,2) {$\scriptstyle \mathbf{2}$};
\node[vert] (a)  at (1.5,1) {$\scriptstyle \mathbf{0}$};
\node[vert] (1)  at (0.5,1) {$\scriptstyle \mathbf{1}$};
\node[vert] (0)  at (0,0) {$\scriptstyle \mathbf{0}$};
\path[d] (2) to (a);
\path[d] (2) to (1);
\path[d] (1) to (0);
\end{tikzpicture}
&
\begin{tikzpicture}[xscale=.9, yscale=.7,baseline=-27]
\node[vert] (3)  at (0,3) {$\scriptstyle \mathbf{1}$};
\node[vert] (2)  at (0,2) {$\scriptstyle \mathbf{0}$};
\node[vert] (1)  at (0,1) {$\scriptstyle\mathbf{1}$};
\node[vert] (0)  at (0,0) {$\scriptstyle \mathbf{0}$};
\path[d,magenta] (3) to (2);
\path[d,magenta] (2) to (1);
\path[d,magenta] (1) to (0);
\end{tikzpicture}
& 
\begin{tikzpicture}[xscale=.9, yscale=.7]
\node (02) [vert,rectangle]  at (1,2) {$\scriptstyle \mathbf{3}$};
\node (0a) [vert,rectangle]  at (1.5,1) {$\scriptstyle \mathbf{1}$};
\node (01) [vert,rectangle] at (0.5,1) {$\scriptstyle \mathbf{0}$};
\node (00) [vert,rectangle] at (0,0) {$\scriptstyle \mathbf{1}$};

\node (12) [vert]  at (2.5,1) {$\scriptstyle \mathbf{2}$};
\node (1a) [vert]  at (3.0,0) {$\scriptstyle \mathbf{0}$};
\node (11) [vert] at (2.0,0) {$\scriptstyle \mathbf{1}$};
\node (10) [vert] at (1.5,-1) {$\scriptstyle \mathbf{0}$};
 
\node (22) [vert,rectangle]  at (4,0) {$\scriptstyle \mathbf{3}$};
\node (2a) [vert,rectangle]  at (4.5,-1) {$\scriptstyle \mathbf{1}$};
\node (21) [vert,rectangle] at (3.5,-1) {$\scriptstyle \mathbf{0}$};
\node (20) [vert,rectangle] at (3,-2) {$\scriptstyle \mathbf{1}$};
 
\node (32) [vert]  at (5.5,-1) {$\scriptstyle \mathbf{2}$};
\node (3a) [vert]  at (6,-2) {$\scriptstyle \mathbf{0}$};
\node (31) [vert] at (5,-2) {$\scriptstyle \mathbf{1}$};
\node (30) [vert] at (4.5,-3) {$\scriptstyle \mathbf{0}$};

\path[d,magenta] (02) to (12);
\path[d,magenta] (01) to (11);
\path[d,magenta] (0a) to (1a);
\path[d,magenta] (00) to (10);
\path[d,magenta] (12) to (22);
\path[d,magenta] (11) to (21);
\path[d,magenta] (1a) to (2a);
\path[d,magenta] (10) to (20);
\path[d,magenta] (22) to (32);
\path[d,magenta] (21) to (31);
\path[d,magenta] (2a) to (3a);
\path[d,magenta] (20) to (30);
  
\path[d] (02) to (01);
\path[d] (02) to (0a);
\path[d] (01) to (00);
\path[d] (12) to (11);
\path[d] (12) to (1a);
\path[d] (11) to (10);
\path[d] (22) to (21);
\path[d] (22) to (2a);
\path[d] (21) to (20);
\path[d] (32) to (31);
\path[d] (32) to (3a);
\path[d] (31) to (30);
\end{tikzpicture}
& 
\begin{tikzpicture}[xscale=.9, yscale=.7]
\node (02) [vert,rectangle,rounded corners=2]  at (1,2) {$\scriptstyle \mathbf{1}$};
\node (0a) [vert,rectangle,rounded corners=2]  at (1.5,1) {$\scriptstyle \mathbf{0}$};
\node (01) [vert,rectangle,rounded corners=2] at (0.5,1) {$\scriptstyle \mathbf{2}$};
\node (00) [vert,rectangle,rounded corners=2] at (0,0) {$\scriptstyle \mathbf{0}$};

\node (12) [vert]  at (2.5,1) {$\scriptstyle \mathbf{2}$};
\node (1a) [vert]  at (3.0,0) {$\scriptstyle \mathbf{0}$};
\node (11) [vert] at (2.0,0) {$\scriptstyle \mathbf{1}$};
\node (10) [vert] at (1.5,-1) {$\scriptstyle \mathbf{0}$};
 
\node (22) [vert,rectangle,rounded corners=2]  at (4,0) {$\scriptstyle \mathbf{1}$};
\node (2a) [vert,rectangle,rounded corners=2]  at (4.5,-1) {$\scriptstyle \mathbf{0}$};
\node (21) [vert,rectangle,rounded corners=2] at (3.5,-1) {$\scriptstyle \mathbf{2}$};
\node (20) [vert,rectangle,rounded corners=2] at (3,-2) {$\scriptstyle \mathbf{0}$};
 
\node (32) [vert]  at (5.5,-1) {$\scriptstyle \mathbf{2}$};
\node (3a) [vert]  at (6,-2) {$\scriptstyle \mathbf{0}$};
\node (31) [vert] at (5,-2) {$\scriptstyle \mathbf{1}$};
\node (30) [vert] at (4.5,-3) {$\scriptstyle \mathbf{0}$};

\path[d,magenta,dashed,thick] (02) to (12);
\path[d,magenta,dashed,thick] (01) to (11);
\path[d,magenta,dotted,thick] (12) to (22);
\path[d,magenta,dotted,thick] (11) to (21);
\path[d,magenta,dashed,thick] (22) to (32);
\path[d,magenta,dashed,thick] (21) to (31);
  
\path[d] (02) to (01);
\path[d] (02) to (0a);
\path[d] (01) to (00);
\path[d] (12) to (11);
\path[d] (12) to (1a);
\path[d] (11) to (10);
\path[d] (22) to (21);
\path[d] (22) to (2a);
\path[d] (21) to (20);
\path[d] (32) to (31);
\path[d] (32) to (3a);
\path[d] (31) to (30);
\end{tikzpicture}
\\
$\GG$ & $\DD_3$ & $\GG\Box\mathsf{D}_{3}$ & $\GG{\delay}\DD_{3}$
\end{tabular}

\caption{\label{fig:PvsD}
Sum $\GG\Box\mathsf{D}_{3}$ and delayed $\GG{\delay}\DD_{3}$ gamegraphs.
}
\end{figure}

Play on the delayed gamegraph $\GG{\delay}\mathsf{D}_{k}$ is similar to play on the sum $\GG\Box\mathsf{D}_{k}$ with a change in determining the end of the play. Players first pick either $\GG$ or $\DD_k$ and then make a move in the chosen game. But the game ends as soon as a terminal position is reached in $\GG$ even if $\DD_{k}$ still has legal moves.

According to the Sprague--Grundy Theorem, $\nim(p,r)=\nim(p)\oplus\pty(r)$
in $\mathsf{G}\Box\mathsf{D}_{k}$. We have a similar result for
$\mathsf{G}{\delay}\mathsf{D}_{k}$.

\begin{prop}
If $p\in\GG$, then $\nim(p,2r)=\nim(p,0)=\nim(p)$
and $\nim(p,2r+1)=\nim(p,1)$ in the delayed gamegraph $\mathsf{G}{\delay}\mathsf{D}_{k}$.
\end{prop}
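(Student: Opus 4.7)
The approach is a strong induction on the rank (longest play length from $(p,r)$) in $\GG\delay\DD_k$, proving both parity statements in tandem. Two easy cases anchor the induction. First, when $p$ is terminal in $\GG$, the delayed gamegraph leaves $(p,r)$ terminal for every $r$ (no $\GG$-move exists, and the delay move is blocked by the definition), so both $\nim(p,r)$ and $\nim(p)$ equal $0$. Second, when $r=0$ and $p$ is non-terminal, the $\DD_k$-component offers no move, so $\Opt(p,0)=\{(q,0):q\in\Opt(p)\}$; applying the induction hypothesis to each such option yields $\nim(p,0)=\mex\{\nim(q):q\in\Opt(p)\}=\nim(p)$.

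For the inductive step I would take non-terminal $p$ and $r\geq1$ and split on the parity of $r$. The odd case $r=2s+1\geq3$ is the cleaner one: the options of $(p,r)$ are $\{(q,r):q\in\Opt(p)\}\cup\{(p,r-1)\}$, and the induction hypothesis replaces $\nim(q,r)$ by $\nim(q,1)$ and $\nim(p,r-1)$ by $\nim(p,0)=\nim(p)$. These coincide term by term with the option nim-values of $(p,1)$, so the two mexes agree and $\nim(p,r)=\nim(p,1)$. In the even case $r=2s\geq2$ the analogous computation yields
\[
\nim(p,2s)=\mex\bigl(\{\nim(q):q\in\Opt(p)\}\cup\{\nim(p,1)\}\bigr),
\]
and since $\nim(p)=\mex\{\nim(q):q\in\Opt(p)\}$, this mex equals $\nim(p)$ exactly when $\nim(p,1)\neq\nim(p)$.

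The only step requiring any thought is verifying that last inequality, which I consider the main obstacle in the sense that everything else is pure bookkeeping. The punchline I would use is that $(p,0)$ is itself an option of $(p,1)$---because $p$ is non-terminal and $0\in\Opt_{\DD_k}(1)$---so the previously-established identity $\nim(p,0)=\nim(p)$ places $\nim(p)$ inside the set of option values of $(p,1)$; by the very definition of $\mex$ this forces $\nim(p,1)\neq\nim(p)$. With this observation the even case closes and the induction is complete.
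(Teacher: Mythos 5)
Your proof is correct and takes essentially the same route as the paper's: an inductive mex computation over the option sets whose only substantive step is that $\nim(p,\text{odd})\ne\nim(p)$ because an even-indexed delay position with nim-value $\nim(p)$ sits among its options. The paper phrases this as structural induction on $\GG$ and checks the inequality at $(p,2r-1)$ via its option $(p,2r-2)$, while you induct on rank in the product and reduce to $(p,1)$ versus $(p,0)$; the difference is purely organizational.
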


\begin{proof}
To show the first result, we use structural induction on the positions of $\mathsf{G}$. If $p$ is a terminal position of $\mathsf{G}$,
then $(p,2r)$ is a terminal position of $\mathsf{G}{\delay}\mathsf{D_{k}}$,
and so $\nim(p,2r)=0=\nim(p)$. Assume $p$ is a nonterminal position
of $\mathsf{G}$ and $2r$ is a position of $\mathsf{D}$. If $r=0$,
then $\nim(p,2r)=\nim(p)$ since $\mathsf{G}$ and the gamegraph induced
by $V_{\mathsf{G}}\times\{0\}$ are isomorphic. So assume $r>0$.
Then
\[
\begin{aligned}\nim(p,2r) & =\mex(\nim(\Opt(p,2r)))\\
 & =\mex(\nim(\Opt(p)\times\{2r\})\cup\{\nim(p,2r-1)\})\\
 & =\mex(\nim(\Opt(p))\cup\{\nim(p,2r-1)\})
\end{aligned}
\]
by induction. Since $(p,2r-2)\in\Opt(p,2r-1)$ and $\nim(p,2r-2)=\nim(p)$
by induction, we must have $\nim(p,2r-1)\ne\nim(p)$. Thus $\nim(p,2r)=\nim(p)$.

Similar argument proves the second result using $\nim(\Opt(p,2r+1))=\nim(\Opt(p,2r-1))$.
\end{proof}
A consequence of this result is that if the second player has a winning
strategy for $\mathsf{G}$, then the second player also has a winning strategy for the delayed game $\mathsf{G}{\delay}\mathsf{D_{2k}}$. Whenever the first player tries to make a delay move in $\mathsf{D_{2k}}$ (along a dotted
arrow in Figure~\ref{fig:PvsD}), the second player can counter this attempt by also
making a move in $\mathsf{D_{2k}}$ (along a dashed arrow).

\begin{cor}
\label{cor:delay}
If $\mathsf{G}$ is a gamegraph, then $\nim(\mathsf{G}{\delay}\mathsf{D}_{2k})=\nim(\mathsf{G})$
and $\nim(\mathsf{G}{\delay}\mathsf{D}_{2k+1})=\nim(\mathsf{G}{\delay}\mathsf{D}_{1})$.
\end{cor}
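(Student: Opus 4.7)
The plan is to derive the corollary as an immediate specialization of the preceding proposition, applied to the starting position of $\mathsf{G}$. Recall that the nim-value of a gamegraph is by definition the nim-value of its unique starting position, so it suffices to identify the starting positions on both sides of each equation and quote the proposition.

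First I would fix notation: let $p_0$ denote the starting position of $\mathsf{G}$. Since the directed path $\mathsf{D}_{n}$ has starting position $n$ (the unique source), the delayed gamegraph $\mathsf{G}\delay\mathsf{D}_{n}$ has starting position $(p_0, n)$. This is because no option in $\mathsf{G}\delay\mathsf{D}_{n}$ reaches $(p_0,n)$: an option of the form $(q,n)$ would require $p_0\in\Opt(q)$, contradicting that $p_0$ is the source of $\mathsf{G}$, while an option of the form $(p_0, s)$ with $s\in\Opt(n)$ does not exist since $\Opt(n)=\{n-1\}$ but nothing maps into $(p_0,n)$ via this rule either.

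Next I would apply the proposition with $p=p_0$. For the even case, set $r=k$ in the first equality of the proposition to obtain
\[
\nim(\mathsf{G}\delay\mathsf{D}_{2k})=\nim(p_0, 2k)=\nim(p_0)=\nim(\mathsf{G}).
\]
For the odd case, set $r=k$ in the second equality to obtain
\[
\nim(\mathsf{G}\delay\mathsf{D}_{2k+1})=\nim(p_0, 2k+1)=\nim(p_0, 1)=\nim(\mathsf{G}\delay\mathsf{D}_{1}),
\]
where the last equality holds because $(p_0,1)$ is the starting position of $\mathsf{G}\delay\mathsf{D}_{1}$.

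There is essentially no obstacle here; the only thing to be careful about is confirming that the starting position of $\mathsf{G}\delay\mathsf{D}_{n}$ is indeed $(p_0, n)$, which is why I would spell that out explicitly. The boundary cases $k=0$ are also covered: when $k=0$, the proposition gives $\nim(p_0,0)=\nim(p_0)$ directly, so $\nim(\mathsf{G}\delay\mathsf{D}_{0})=\nim(\mathsf{G})$, consistent with the fact that $\mathsf{D}_{0}$ has only a terminal position and $\mathsf{G}\delay\mathsf{D}_{0}\cong\mathsf{G}$.
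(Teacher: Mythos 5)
Your proposal is correct and matches the paper's intent exactly: the paper states this corollary without a separate proof as an immediate consequence of the preceding proposition, and your argument is precisely that specialization to the starting position $(p_0,n)$ of $\mathsf{G}{\delay}\mathsf{D}_n$, with $r=k$. Spelling out that $(p_0,n)$ is the source of the delayed gamegraph is a reasonable extra precaution but introduces nothing beyond what the paper implicitly assumes.
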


\begin{example}
Figure~\ref{fig:quest} shows that $\nim(p)=0$ in $\GG$ does not imply that $\nim(p,1)=1$ in $\GG{\delay}\mathsf{D}_1$ for nonterminal $p$. It is clear that $\nim(p,1)\ne\nim(p)$ for all nonterminal $p$ but an actual formula for $\nim(p,1)$ seems elusive. 
\end{example}

\begin{figure}[h!]
\begin{tabular}{ccc}
\begin{tikzpicture}[xscale=.9, yscale=.7]
\node[vert] (3)  at (0,3) {$\scriptstyle \mathbf{0}$};
\node[vert] (2)  at (0,2) {$\scriptstyle \mathbf{2}$};
\node[vert] (1)  at (0,1) {$\scriptstyle\mathbf{1}$};
\node[vert] (0)  at (0,0) {$\scriptstyle \mathbf{0}$};
\path[d] (3) to (2);
\path[d] (2) to (1);
\path[d] (1) to (0);
\path[d] (3) to[bend right=40] (1);
\path[d] (2) to[bend left=30] (0);
\end{tikzpicture}
\hfill
& \hspace{1in} &
\hfill  
\begin{tikzpicture}[xscale=.9, yscale=.7]
\node[vert] (03) at (0,3.2) {$\scriptstyle \mathbf{0}$};
\node[vert] (02) at (0,2.2) {$\scriptstyle \mathbf{2}$};
\node[vert] (01) at (0,1.2) {$\scriptstyle \mathbf{1}$};
\node[vert] (00) at (0,0.2) {$\scriptstyle \mathbf{0}$};
  
\node[vert,rectangle,rounded corners=2] (13) at (2,3.1) {$\scriptstyle \mathbf{3}$};
\node[vert,rectangle,rounded corners=2] (12) at (2,2.1) {$\scriptstyle \mathbf{1}$};
\node[vert,rectangle,rounded corners=2] (11) at (2,1.1) {$\scriptstyle \mathbf{2}$};
\node[vert,rectangle,rounded corners=2] (10) at (2,0.1) {$\scriptstyle \mathbf{0}$};
  
\node[vert] (23) at (4,3) {$\scriptstyle \mathbf{0}$};
\node[vert] (22) at (4,2) {$\scriptstyle \mathbf{2}$};
\node[vert] (21) at (4,1) {$\scriptstyle \mathbf{1}$};
\node[vert] (20) at (4,0) {$\scriptstyle \mathbf{0}$};

\path[d,magenta] (03) to (13);
\path[d,magenta] (02) to (12);
\path[d,magenta] (01) to (11);
  
\path[d,magenta] (13) to (23);
\path[d,magenta] (12) to (22);
\path[d,magenta] (11) to (21);
  
\path[d] (03) to (02);
\path[d] (02) to (01);
\path[d] (01) to (00);
\path[d] (03) to[bend right=40] (01);
\path[d] (02) to[bend left=30] (00);
\path[d] (13) to (12);
\path[d] (12) to (11);
\path[d] (11) to (10);
\path[d] (13) to[bend right=40] (11);
\path[d] (12) to[bend left=30] (10);
\path[d] (23) to (22);
\path[d] (22) to (21);
\path[d] (21) to (20);
\path[d] (23) to[bend right=40] (21);
\path[d] (22) to[bend left=30] (20);
\end{tikzpicture}\\
$\GG$ & & $\GG{\sqcap}\DD_2$
\end{tabular}
\caption{\label{fig:quest}
The nim-values of the positions of $\GG$ and $\GG{\delay}\mathsf{D}_2$.
}
\end{figure}

\section{Matrix games}

Let $M=\left[\begin{smallmatrix}a & b & c\\
d & e & f\\
g & h & i
\end{smallmatrix}\right]$ be a $3\times3$ matrix with nonnegative entries. Let $S:=\{r_{1},r_{3},c_{1},c_{3}\}$
be the set of \emph{edge sums}, where
\[
r_{1}:=a+b+c,\quad r_{3}:=g+h+i,\quad c_{1}:=a+d+g,\quad c_{3}=c+f+i.
\]
We consider two games in which players decrease one of the positive
entries of the matrix by 1. In the $\DNT(M)$ game, every element
of $S$ must remain positive. The $\TER(M)$ game ends as soon as
one of the elements of $S$ becomes $0$. In both games, the last
player to move wins. 

The reason we are interested in matrix games is their connection to
grid games. Let $\mathsf{G}$ be either $\DNT(P_{m}\Box P_{n})$
or $\TER(P_{m}\Box P_{n})$. We define a map $\alpha$ from the positions of $\mathsf{G}$ to $\mathbb{W}^{3\times3}$. We do this by counting the number of unselected vertices in nine regions of $P_{m}\Box P_{n}$.  These regions are the corners, the edges without corners, and the interior, as shown:
\begin{center}
\scalebox{0.75}{
\begin{tabular}{c|ccc|c}
$a$ &  & $b$ &  & $c$\tabularnewline
\hline 
\multirow{2}{*}{$d$} &  & \multirow{2}{*}{$e$} &  & \multirow{2}{*}{$f$}\tabularnewline
 &  &  &  & \tabularnewline
\hline 
$g$ &  & $h$ &  & $i$\tabularnewline
\end{tabular}
}
\par\end{center}
If $m=2$, then some regions are empty and $d=e=f=0$. 

\begin{example}
A position $P$ in a grid game on $P_{4}\Square P_{5}$ and its corresponding
matrix $\alpha(P)$ are
\[
P:
\begin{tikzpicture}[scale=.3,baseline=10pt]
\draw[color=black] (-.5,2.5)--(4.5,2.5);
\draw[color=black] (-.5,.5)--(4.5,.5);
\draw[color=black] (.5,-.5)--(.5,3.5);
\draw[color=black] (3.5,-.5)--(3.5,3.5);
\node at (0,0) {$\bullet$};
\node at (0,1) {$\circ$};
\node at (0,2) {$\circ$};
\node at (0,3) {$\bullet$};
\node at (1,0) {$\circ$};
\node at (1,1) {$\bullet$};
\node at (1,2) {$\circ$};
\node at (1,3) {$\circ$};
\node at (2,0) {$\bullet$};
\node at (2,1) {$\bullet$};
\node at (2,2) {$\bullet$};
\node at (2,3) {$\circ$};
\node at (3,0) {$\bullet$};
\node at (3,1) {$\circ$};
\node at (3,2) {$\bullet$};
\node at (3,3) {$\circ$};
\node at (4,0) {$\circ$};
\node at (4,1) {$\bullet$};
\node at (4,2) {$\circ$};
\node at (4,3) {$\circ$};
\end{tikzpicture}
,\qquad\alpha(P)=\left[\begin{smallmatrix}0 & 3 & 1\\
2 & 2 & 1\\
0 & 1 & 1
\end{smallmatrix}\right].
\]
\end{example}

The next result is easily seen.

\begin{prop}
\label{prop:alpha}
The map $\alpha$ is option preserving between $\DNT(P_{m}\Box P_{n})\to\DNT(M)$
and $\TER(P_{m}\Box P_{n})\to\TER(M)$, where $M$ is the image
of the starting position.
\end{prop}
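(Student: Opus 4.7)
The plan is to verify the defining identity $\Opt_\HH(\alpha(P)) = \alpha(\Opt_\GG(P))$ for both pairs $(\GG,\HH) = (\DNT(P_m\Box P_n),\DNT(M))$ and $(\TER(P_m\Box P_n),\TER(M))$. To begin, I would observe that the nine grid regions partition $V$, so each entry of $\alpha(P)$ counts the number of unselected vertices in the corresponding region; in particular, the four edge sums $r_1$, $r_3$, $c_1$, $c_3$ count the unselected vertices along the top, bottom, left, and right edges of the natural embedding of $P_m\Box P_n$ in $\mathbb{R}^2$. By the face characterization recalled in the paragraph following the 2D hull example, $[V\setminus P]=V$ holds if and only if each of these four edges meets $V\setminus P$, equivalently $r_1,r_3,c_1,c_3>0$, which is exactly the non-termination condition for the matrix game. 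In particular, $\alpha$ sends positions of $\GG$ to positions of $\HH$ and non-terminal positions to non-terminal positions, so it is a well-defined map of gamegraphs.

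For the forward inclusion $\alpha(\Opt_\GG(P))\subseteq\Opt_\HH(\alpha(P))$, I would note that every grid move $P\mapsto P\cup\{v\}$ decrements precisely the entry of $\alpha(P)$ indexed by the region containing $v$, and does so by $1$. In the $\DNT$ case, legality of the grid move is equivalent via the characterization to all edge sums of $\alpha(P\cup\{v\})$ remaining positive, which is exactly the matrix legality condition for $\DNT(M)$. In the $\TER$ case, a non-terminal grid position admits any unselected vertex as a move, and the corresponding matrix is non-terminal and admits any positive-entry decrement, while terminal positions on both sides have empty option sets.

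For the reverse inclusion $\Opt_\HH(\alpha(P))\subseteq\alpha(\Opt_\GG(P))$, I would start with any matrix option that decrements some entry from $k>0$ to $k-1$. Positivity of this entry guarantees that the corresponding region of $P$ contains at least one unselected vertex $v$, so setting $Q:=P\cup\{v\}$ gives $\alpha(Q)$ equal to the target matrix. The $\DNT$ or $\TER$ legality of $Q$ then follows by applying the edge-sum characterization to $\alpha(Q)$, exactly as in the forward direction.

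I do not anticipate any serious obstacle; the argument is essentially a careful translation between the combinatorial geometry of the grid game and the bookkeeping of the matrix game, and the characterization of $[V\setminus P]=V$ in terms of edges does all of the heavy lifting. The only minor edge case is when $m=2$ (or by symmetry $n=2$), in which the regions tracked by $d$, $e$, $f$ are empty and the corresponding row of $\alpha(P)$ is identically zero, but the partition argument and the face characterization still apply without modification.
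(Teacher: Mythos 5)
Your proposal is correct and matches the paper's intent: the paper simply asserts this proposition is "easily seen," and the verification it has in mind is exactly your translation via the face/edge-sum characterization of $[V\setminus P]=V$, with region-wise decrements giving both inclusions of $\Opt_{\HH}(\alpha(P))=\alpha(\Opt_{\GG}(P))$. No gaps; your handling of the $m=2$ degenerate rows and of terminal positions in the $\TER$ case is the right level of care.
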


Note that the image of the starting position of a game on $P_m \Box P_n$ is 
\[
\left[\begin{smallmatrix}1 & b & 1 \\ d & e & d \\ 1 & b & 1\end{smallmatrix}\right],
\] 
where $b=n-2$, $d=m-2$, and $e=(m-2)(n-2)$.

Since option preserving maps preserve nim-values, we have the following.

\begin{cor}
\label{cor:alpha}If $M$ is the image of the starting position through $\alpha$, then
\[
\nim(\DNT(P_{m}\Box P_{n}))=\nim(\DNT(M)),\quad\nim(\TER(P_{m}\Box P_{n}))=\nim(\TER(M)).
\]
\end{cor}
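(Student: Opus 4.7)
The plan is to deduce the corollary directly from Proposition~\ref{prop:alpha} by invoking the general principle, cited from \cite[Corollary~4.18]{Basic2024}, that option preserving maps between gamegraphs preserve nim-values. Recall that by definition the nim-value of a gamegraph is the nim-value of its (unique) starting position, so it suffices to compare nim-values of starting positions on the two sides.

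First I would identify the starting positions on both sides. In $\DNT(P_m\Box P_n)$ (respectively $\TER(P_m\Box P_n)$), the starting position is the empty selection $P=\emptyset$, so every vertex is unselected. Counting unselected vertices in the nine prescribed regions gives $a=c=g=i=1$ corner vertices, $b=h=n-2$ and $d=f=m-2$ edge vertices, and $e=(m-2)(n-2)$ interior vertices; thus $\alpha(\emptyset)=M$ as displayed in the paragraph preceding the corollary. Hence the starting position of $\DNT(P_m\Box P_n)$ maps to the starting position $M$ of $\DNT(M)$, and likewise for $\TER$.

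Next I would apply Proposition~\ref{prop:alpha}, which asserts that $\alpha$ is option preserving as a map $\DNT(P_m\Box P_n)\to\DNT(M)$ and as a map $\TER(P_m\Box P_n)\to\TER(M)$. By \cite[Corollary~4.18]{Basic2024}, an option preserving map $\beta$ between gamegraphs satisfies $\nim(\beta(p))=\nim(p)$ for every position $p$. Applying this to the starting position $\emptyset$ in each case yields
\[
\nim(\DNT(P_m\Box P_n))=\nim(\emptyset)=\nim(\alpha(\emptyset))=\nim(M)=\nim(\DNT(M)),
\]
and identically with $\DNT$ replaced by $\TER$, which is exactly the claim.

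There is essentially no obstacle to overcome here; the content of the corollary is already packaged in Proposition~\ref{prop:alpha} and the cited functoriality of nim under option preserving maps. The only small point that merits verification is the matching of starting positions, since a priori the image $\alpha(\GG)$ need only be a subgraph of the full matrix gamegraph on arbitrary starting matrices; but because the gamegraphs $\DNT(M)$ and $\TER(M)$ are by definition generated by all subpositions of $M$, and $\alpha$ being option preserving guarantees that $\alpha(\GG)$ is exactly that set of subpositions, the two gamegraphs coincide and the equality of nim-values is immediate.
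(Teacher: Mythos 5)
Your proposal is correct and follows the paper's own (essentially one-line) argument: the paper derives the corollary immediately from Proposition~\ref{prop:alpha} together with the fact that option preserving maps preserve nim-values, which is exactly what you do. The extra care you take in matching starting positions and verifying that $\alpha(\GG)$ is the full gamegraph $\DNT(M)$ (resp.\ $\TER(M)$) is a harmless elaboration of the same route.
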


\begin{prop}
\label{prop:DNTmiddle}For each matrix
\[
\nim(\DNT(\left[\begin{smallmatrix}a & b & c\\
d & e & f\\
g & h & i
\end{smallmatrix}\right]))=\nim(\DNT(\left[\begin{smallmatrix}a & b & c\\
d & 0 & f\\
g & h & i
\end{smallmatrix}\right]))\oplus\pty(e).
\]
\end{prop}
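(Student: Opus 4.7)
The plan is to exploit the fact that the central entry $e$ appears in none of the four edge sums $r_{1}, r_{3}, c_{1}, c_{3}$: a move that decreases $e$ has no effect on the legality of any other move, and conversely, whether an outer-entry move is legal depends only on the edge sums and not on the current value of $e$. This decoupling suggests realizing $\DNT(M)$ as a disjunctive sum $\DNT(M_{0})\Box\mathsf{D}_{e}$, where $M_{0}$ is obtained from $M$ by replacing the middle entry with $0$, and then invoking the Sprague--Grundy Theorem together with $\nim(\mathsf{D}_{e})=\pty(e)$.

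Concretely, I would define a map $\beta$ from positions of $\DNT(M)$ to positions of $\DNT(M_{0})\Box\mathsf{D}_{e}$ by sending a matrix with outer entries $p$ and central entry $r\in\{0,1,\ldots,e\}$ to the pair $(p,r)$, where $p$ is read as a position of $\DNT(M_{0})$. This is clearly a bijection on position sets. To show $\beta$ is option preserving, one checks that each option of $\DNT(M)$ falls into exactly one of two types. Decreasing the central entry requires only $r>0$, is always legal in $\DNT(M)$ because $e$ is not involved in any edge sum, and produces the pair $(p,r-1)$, matching precisely the $\mathsf{D}_{e}$-component move in the sum. Decreasing an outer entry by $1$ is legal in $\DNT(M)$ iff all four edge sums remain positive; since the edge sums depend only on the outer entries, this is legal iff it is legal in $\DNT(M_{0})$, and the resulting position is $(p',r)$, matching a $\DNT(M_{0})$-component move. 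Thus the options at each position correspond bijectively, so $\beta$ is a gamegraph isomorphism.

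Finally, applying the Sprague--Grundy identity $\nim(\mathsf{G}\Box\mathsf{H})=\nim(\mathsf{G})\oplus\nim(\mathsf{H})$ (quoted in the paper) and using $\nim(\mathsf{D}_{e})=\pty(e)$ yields
\[
\nim(\DNT(M))=\nim(\DNT(M_{0})\Box\mathsf{D}_{e})=\nim(\DNT(M_{0}))\oplus\pty(e),
\]
as desired. I do not anticipate any real obstacle: the only delicate point is matching the terminal positions on both sides, which reduces to observing that a position in either game is terminal exactly when $r=0$ and every outer-entry move would kill some edge sum. Note that this is genuinely a sum rather than a delayed sum because, unlike in $\TER$, the $\DNT$ game does not end when outer-entry moves are blocked so long as $e>0$; hence the $\delay$-construction from Section~3 is not needed here.
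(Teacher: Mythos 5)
Your proof is correct and takes essentially the same route as the paper: both arguments observe that the central entry lies in no edge sum, so the game decomposes as the disjunctive sum $\DNT(M_0)\Box\mathsf{D}_{e}$ (with $M_0$ the matrix whose center is zeroed), and then apply Sprague--Grundy nim-addition with $\nim(\mathsf{D}_e)=\pty(e)$. The paper merely asserts the isomorphism in one line, so your explicit verification of the option-preserving bijection and of the terminal positions is just a more detailed rendering of the same argument.
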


\begin{proof}
Note that the game is not finished until the middle entry is $0$, so the game is isomorphic to a game sum. That is,
\[
\DNT(\left[\begin{smallmatrix}a & b & c\\
d & e & f\\
g & h & i
\end{smallmatrix}\right])\cong\DNT(\left[\begin{smallmatrix}a & b & c\\
d & 0 & f\\
g & h & i
\end{smallmatrix}\right])\Box\mathsf{D}_{e}.
\]
The result now follows from nim addition.
\end{proof}

\begin{prop}
\label{prop:TERmiddle}For each matrix
\[
\nim(\TER(\left[\begin{smallmatrix}a & b & c\\
d & e & f\\
g & h & i
\end{smallmatrix}\right]))=\nim(\TER(\left[\begin{smallmatrix}a & b & c\\
d & \pty(e) & f\\
g & h & i
\end{smallmatrix}\right])).
\]
\end{prop}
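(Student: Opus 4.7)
The plan is to exhibit an option-preserving isomorphism
\[
\beta\colon \TER\!\left(\left[\begin{smallmatrix}a & b & c\\ d & e & f\\ g & h & i\end{smallmatrix}\right]\right)\;\longrightarrow\;\TER(M^{\circ})\,\delay\,\mathsf{D}_{e},
\]
where $M^{\circ}$ denotes the matrix obtained from $M$ by replacing the middle entry $e$ with $0$, and then to reduce the $\mathsf{D}_e$-factor to its parity via Corollary~\ref{cor:delay}. The key observation driving the construction is that decreasing the middle entry by $1$ leaves each of the four edge sums $r_1,r_3,c_1,c_3$ unchanged. Consequently, a move on the middle entry in $\TER(M)$ never ends the game; it acts purely as a delay move, in perfect analogy with a move in $\mathsf{D}_e$.

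Concretely, for a position $N$ of $\TER(M)$ I would set $\beta(N):=(N^{\circ},N_{22})$, where $N^{\circ}$ is $N$ with its middle entry replaced by $0$ and $N_{22}$ is the middle entry of $N$. This is immediately a bijection, since $N$ is uniquely recovered from the pair $(N^{\circ},N_{22})$. To see that it is option preserving, one checks that the two kinds of legal moves in $\TER(M)$ correspond exactly to the two kinds of legal moves in the delayed gamegraph: an edge-entry move $N\to N'$ in $\TER(M)$ is carried to $(N^{\circ},N_{22})\to((N')^{\circ},N_{22})$, while a middle-entry move $N\to N''$ (available precisely when $N_{22}>0$ and $N$ is nonterminal) is carried to $(N^{\circ},N_{22})\to(N^{\circ},N_{22}-1)$. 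The blocking condition built into the definition of $\delay$—that $\mathsf{D}_e$-moves are forbidden when the $\GG$-coordinate is terminal—matches the fact that $N$ and $N^{\circ}$ have identical edge sums, so $N$ is terminal in $\TER(M)$ if and only if $N^{\circ}$ is terminal in $\TER(M^{\circ})$.

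Since option-preserving maps preserve nim-values, $\nim(\TER(M))=\nim(\TER(M^{\circ})\delay\mathsf{D}_{e})$. By Corollary~\ref{cor:delay}, this equals $\nim(\TER(M^{\circ}))$ when $e$ is even and $\nim(\TER(M^{\circ})\delay\mathsf{D}_{1})$ when $e$ is odd. In the even case $\pty(e)=0$ and $M^{\circ}$ is exactly the matrix on the right-hand side of the claim. In the odd case $\pty(e)=1$, and applying $\beta$ in the boundary case $e=1$ identifies $\TER(M^{\circ})\delay\mathsf{D}_{1}$ with the $\TER$ game on the matrix whose middle entry is $1$; this is again the right-hand side of the claim.

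I expect the only genuine subtlety to be the bookkeeping around terminal positions when checking that $\beta$ is option preserving: one must verify that the ``no delay move from a terminal $\GG$-position'' clause in the definition of $\delay$ lines up precisely with the fact that no move of any kind is available in $\TER(M)$ once an edge sum has dropped to zero. Once this is nailed down, the rest of the argument is a direct appeal to the delayed-gamegraph framework already developed in the paper.
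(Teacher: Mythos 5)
Your proposal is correct and follows essentially the same route as the paper: the paper's proof also observes that decrementing the middle entry never changes an edge sum, identifies $\TER(M)$ with $\TER(M^{\circ})\,\delay\,\mathsf{D}_{e}$, and concludes by Corollary~\ref{cor:delay}. You simply spell out the option-preserving bijection and the terminal-position bookkeeping that the paper leaves implicit.
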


\begin{proof}
Note that the game is finished as soon as one of the edge sums becomes 0 even if the middle entry is not zero. This essentially means that  decreasing $e$ is a delay move. Hence
\[
\TER(\left[\begin{smallmatrix}a & b & c\\
d & e & f\\
g & h & i
\end{smallmatrix}\right])\cong\TER(\left[\begin{smallmatrix}a & b & c\\
d & 0 & f\\
g & h & i
\end{smallmatrix}\right])\delay\mathsf{D}_{e}.
\]
The result now follows from Corollary~\ref{cor:delay}.
\end{proof}

\section{Grid games}

We study the grid games $\DNT(P_{m}\Box P_{n})$ and $\TER(P_{m}\Box P_{n})$
through their corresponding matrix games using Corollary~\ref{cor:alpha}.
\begin{prop}
\label{prop:DNT}For each grid graph,
$\nim(\DNT(P_{m}\Box P_{n}))=\pty(mn)$.
\end{prop}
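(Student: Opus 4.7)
The plan is to reduce to a matrix game via the option-preserving map $\alpha$, strip off the middle entry using Proposition~\ref{prop:DNTmiddle}, and exhibit a $180^\circ$-rotation mirror strategy for the second player on the resulting $3\times 3$ matrix.

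By Corollary~\ref{cor:alpha} it suffices to compute $\nim(\DNT(M))$ where
\[
M=\left[\begin{smallmatrix}1 & n-2 & 1\\ m-2 & (m-2)(n-2) & m-2\\ 1 & n-2 & 1\end{smallmatrix}\right].
\]
Proposition~\ref{prop:DNTmiddle} gives $\nim(\DNT(M))=\nim(\DNT(M'))\oplus\pty((m-2)(n-2))$, where $M'$ is $M$ with middle entry replaced by $0$. Since $(m-2)(n-2)-mn=-2(m+n-2)$ is even, $\pty((m-2)(n-2))=\pty(mn)$, so the proposition reduces to showing $\nim(\DNT(M'))=0$.

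For this I would give the second player the mirror strategy induced by the $180^\circ$ rotation $\sigma\colon(i,j)\mapsto(4-i,4-j)$, which preserves $M'$ and swaps $r_1\leftrightarrow r_3$ and $c_1\leftrightarrow c_3$. After a first-player move at $(i,j)$, the second player decreases the entry at $\sigma(i,j)$. Since the zero middle entry is never touched, $\sigma(i,j)\neq(i,j)$. The intended invariant is that the matrix is $\sigma$-symmetric at the start of each first-player turn; given this invariant, the entry at $\sigma(i,j)$ equals the pre-move entry at $(i,j)$ and is therefore at least $1$, so the response is available.

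The only real obstacle is verifying that the mirror move keeps all four edge sums positive. The sets of edge sums touched at $(i,j)$ and at $\sigma(i,j)$ are disjoint, since each cell lies in at most one edge sum from $\{r_1,r_3\}$ and at most one from $\{c_1,c_3\}$, and $\sigma$ swaps both pairs. By $\sigma$-symmetry before the turn, every edge sum containing $\sigma(i,j)$ equals the partner containing $(i,j)$; the partner had to be at least $2$ for the first player's decrement to be legal, so each edge sum decremented by the second player is also at least $2$ and remains positive. Hence the second player always has a legal response, makes the last move, and $\nim(\DNT(M'))=0$, yielding the claimed value $\pty(mn)$.
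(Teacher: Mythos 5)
Your proposal is correct and follows essentially the same route as the paper: reduce to the matrix game via Corollary~\ref{cor:alpha}, strip the middle entry with Proposition~\ref{prop:DNTmiddle} (using $\pty((m-2)(n-2))=\pty(mn)$), and give the second player the central-reflection pairing strategy on the reduced matrix. Your explicit check that the two moves of a round touch disjoint sets of edge sums, so the mirrored response is always legal, is exactly the point the paper asserts briefly and then documents via the case analysis diagram in Figure~\ref{fig:DNTeven}.
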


\begin{proof}
By Proposition~\ref{prop:DNTmiddle}, the nim-value is $\nim(\DNT(M))\oplus\pty(mn)$, where 
\[
M=
\left[\begin{smallmatrix}1 & b & 1\\
d & 0 & d\\
1 & b & 1
\end{smallmatrix}\right].
\]
The second player wins $\DNT(M)$ by always making a move at an entry that is centrally symmetric to the move of the first player. Note that the position always remains centrally symmetric after a move of the second player. This guarantees that the second player always has an available move. 
%
%
\end{proof}

\begin{rem}\label{rem:CaseAnalysisDiagramNotation}
The strategy for the second player in the previous
proof can be depicted using the diagram in Figure~\ref{fig:DNTeven}
with the assumptions $x,y\ge1$ and $X,Y\ge2$. The arrows of the diagram
represent moves in the game. The labels on the arrows indicate the
choice of the player using the eight cardinal directions. The game positions
after the first player's move are represented by the $\bullet$ symbol.
Some of the $\bullet$ positions can be visited more than once. This
does not allow for infinite play since $x+y$ is decreasing after
each move. The arrow pointing to a box indicates that one of the positions
in the box is reachable. The starting position of the game is dependent
on the value of $m$. This is indicated by the \textcolor{cyan}{${\color{red}\rightsquigarrow}$}
symbol.

We only consider available moves and resulting positions up to symmetry.
For example, let 
\[
P=\left[\begin{smallmatrix}1 & x & 0\\
0 & 0 & 0\\
0 & x & 1
\end{smallmatrix}\right],\quad Q=\left[\begin{smallmatrix}1 & x & 0\\
y & 0 & y\\
0 & x & 1
\end{smallmatrix}\right], \quad R=\left[\begin{smallmatrix}0 & x & 1\\
y & 0 & y\\
1 & x & 0
\end{smallmatrix}\right].
\]
The moves $\stackrel[\,]{N}{\to}$ and $\stackrel[\,]{S}{\to}$ are
considered the same in position $P$ since a central reflection transforms
one to the other while preserving the matrix. Positions $Q$ and $R$
are also considered the same because of a reflection through a vertical
line.
\end{rem}

The point of the admittedly more complex representation in Figure~\ref{fig:DNTeven}, explained in Remark~\ref{rem:CaseAnalysisDiagramNotation}, is that we are going to need these \emph{case analysis diagrams} to describe more complicated strategies.

\begin{figure}[h!]
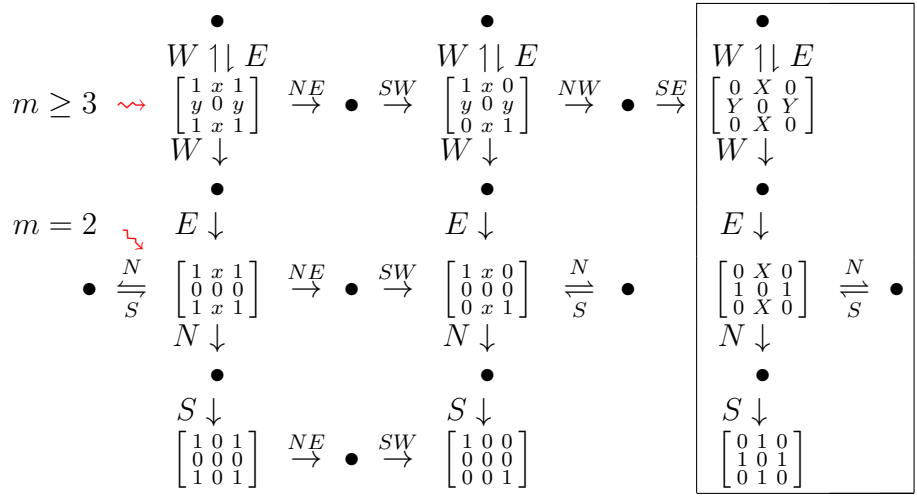

\centerline{\setlength{\tabcolsep}{.3em}
\renewcommand{\arraystretch}{1}%
\begin{tabular}{crccccccccc|ccc|}
\cline{12-14} \cline{13-14} \cline{14-14} 
 &  &  & $\bullet$ &  &  &  & $\bullet$ &  &  &  & $\bullet$ &  & \tabularnewline
 &  &  & $W\upharpoonleft\downharpoonright E$ &  &  &  & $W\upharpoonleft\downharpoonright E$ &  &  &  & $W\upharpoonleft\downharpoonright E$ &  & \tabularnewline
 & $m\ge3$ & \textcolor{cyan}{${\color{red}\rightsquigarrow}$} & $\left[\begin{smallmatrix}1 & x & 1\\
y & 0 & y\\
1 & x & 1
\end{smallmatrix}\right]$ & $\stackrel{NE}{\to}$ & $\bullet$ & $\stackrel{SW}{\to}$ & $\left[\begin{smallmatrix}1 & x & 0\\
y & 0 & y\\
0 & x & 1
\end{smallmatrix}\right]$ & $\stackrel{NW}{\to}$ & $\bullet$ & $\stackrel{SE}{\to}$ & $\left[\begin{smallmatrix}0 & X & 0\\
Y & 0 & Y\\
0 & X & 0
\end{smallmatrix}\right]$ &  & \tabularnewline
 &  &  & $W\downarrow\phantom{N}$ &  &  &  & $W\downarrow\phantom{N}$ &  &  &  & $W\downarrow\phantom{N}$ &  & \tabularnewline
 &  &  & $\bullet$ &  &  &  & $\bullet$ &  &  &  & $\bullet$ &  & \tabularnewline
 & $m=2$ & \textcolor{red}{\rotatebox{-45}{$\rightsquigarrow$}} & $E\downarrow\phantom{N}$ &  &  &  & $E\downarrow\phantom{N}$ &  &  &  & $E\downarrow\phantom{N}$ &  & \tabularnewline
 & $\bullet$ & $\stackrel[S]{N}{\leftrightharpoons}$ & $\left[\begin{smallmatrix}1 & x & 1\\
0 & 0 & 0\\
1 & x & 1
\end{smallmatrix}\right]$ & $\stackrel{NE}{\to}$ & $\bullet$ & $\stackrel{SW}{\to}$ & $\left[\begin{smallmatrix}1 & x & 0\\
0 & 0 & 0\\
0 & x & 1
\end{smallmatrix}\right]$ & $\stackrel[S]{N}{\rightleftharpoons}$ & $\bullet$ &  & $\left[\begin{smallmatrix}0 & X & 0\\
1 & 0 & 1\\
0 & X & 0
\end{smallmatrix}\right]$ & $\stackrel[S]{N}{\rightleftharpoons}$ & $\bullet$\tabularnewline
 &  &  & $N\downarrow\phantom{N}$ &  &  &  & $N\downarrow\phantom{N}$ &  &  &  & $N\downarrow\phantom{N}$ &  & \tabularnewline
 &  &  & $\bullet$ &  &  &  & $\bullet$ &  &  &  & $\bullet$ &  & \tabularnewline
 &  &  & $S\downarrow\phantom{N}$ &  &  &  & $S\downarrow\phantom{N}$ &  &  &  & $S\downarrow\phantom{N}$ &  & \tabularnewline
 &  &  & $\left[\begin{smallmatrix}1 & 0 & 1\\
0 & 0 & 0\\
1 & 0 & 1
\end{smallmatrix}\right]$ & $\stackrel{NE}{\to}$ & $\bullet$ & $\stackrel{SW}{\to}$ & $\left[\begin{smallmatrix}1 & 0 & 0\\
0 & 0 & 0\\
0 & 0 & 1
\end{smallmatrix}\right]$ &  &  &  & $\left[\begin{smallmatrix}0 & 1 & 0\\
1 & 0 & 1\\
0 & 1 & 0
\end{smallmatrix}\right]_{\vphantom{\Int_{\Int}}}$ &  & \tabularnewline
\cline{12-14} \cline{13-14} \cline{14-14} 
\end{tabular}}

\caption{\label{fig:DNTeven}Case analysis diagram describing a strategy for the second
player in $\protect\DNT(M)$. We assume $x,y\ge1$ and $X,Y\ge2$.
See Remark~\ref{rem:CaseAnalysisDiagramNotation} for an explanation of the notation.}
\end{figure}

Our next goal is to handle $\TER(P_{m}\Box P_{n})$. First, we
present a result about matrix games to handle the $\pty(mn)=0$ case.
\begin{prop}
\label{prop:TERmatrixEven}If $M=\left[\begin{smallmatrix}1 & b & 1\\
d & 0 & d\\
1 & b & 1
\end{smallmatrix}\right]$, then $\nim(\TER(M))=0$.
\end{prop}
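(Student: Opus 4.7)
The plan is to exhibit a winning strategy for the second player in $\TER(M)$, which gives $\nim(\TER(M)) = 0$. The strategy maintains the invariant that, after every second-player move, the matrix is centrally symmetric and each of the four edge sums $r_1, r_3, c_1, c_3$ is at least $2$. The starting matrix satisfies this invariant: $M$ is centrally symmetric, and $r_1 = r_3 = b + 2 \ge 2$, $c_1 = c_3 = d + 2 \ge 2$. Moreover, the invariant forbids the first player from ending the game on their move, since decreasing a single entry reduces each edge sum by at most $1$, so all sums remain at least $1$.

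From any invariant-satisfying position, the first player decreases some positive entry (the center $(2,2)$ is $0$ throughout and so never a legal target). I split into two cases on what the first player's move does. In Case~(I), some edge sum drops from $2$ to $1$; the second player then decreases a positive entry contributing to that sum (at least one such entry exists, since the sum is $1$), forcing that sum to $0$ and ending the game on the second-player move. In Case~(II), every edge sum remains at least $2$ after the first player's move; the second player decreases the entry centrally symmetric to the first player's choice. By the prior central symmetry, this mirror entry has the same positive value the first player's entry had, and it is distinct from the first player's entry because the center $(2,2)$ is excluded, so the mirror move is legal and restores central symmetry.

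The key check is that Case~(II) preserves the $\ge 2$ lower bound. Write the pre-first-move symmetric values as $r_1 = r_3 = R$ and $c_1 = c_3 = C$. The first player's move decreases either one of $\{r_i, c_j\}$ (edge-middle move) or a pair $\{r_i, c_j\}$ (corner move), each by $1$. In Case~(II) the resulting values are still $\ge 2$, so $R \ge 3$ whenever $r_1$ or $r_3$ was hit and $C \ge 3$ whenever $c_1$ or $c_3$ was hit. The mirror move decreases the centrally symmetric partner sum by the same amount, producing a centrally symmetric position in which both shared sums are still $\ge 2$. Since the total entry sum strictly decreases on each move, the game terminates, and by the strategy this happens only in Case~(I), on a second-player move. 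Hence the second player wins $\TER(M)$ and $\nim(\TER(M)) = 0$.

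The main obstacle is the Case~(II) bookkeeping: checking both that the mirror move is legal and that it preserves the $\ge 2$ bound on all four edge sums simultaneously, so that on the next turn the first player still cannot end the game. Legality follows from central symmetry combined with the exclusion of the $0$ center; the lower bound is maintained precisely because Case~(I) intercepts every drop to $1$, leaving Case~(II) to handle only decrements that keep the post-mirror symmetric sums at $2$ or above. Making this completely transparent would call for a case-analysis diagram in the style of Figure~\ref{fig:DNTeven}, enumerating the three kinds of first-player moves (corner, top/bottom edge-middle, left/right edge-middle) together with the second player's response in each.
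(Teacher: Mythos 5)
Your proof is correct and uses exactly the strategy the paper itself uses (Remark~\ref{rem:TEReven}): win immediately if some edge sum has dropped to $1$, and otherwise play the central reflection of the opponent's move. The only difference is presentational — you verify the strategy by maintaining the invariant ``centrally symmetric with all four edge sums $\ge 2$ after each second-player move'' rather than by the case analysis diagram of Figure~\ref{fig:TEReven}, and that invariant argument, including the $R\ge 3$ bookkeeping in your Case~(II), is sound.
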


\begin{proof}
The second player wins using the strategy depicted in the case analysis diagram of Figure~\ref{fig:TEReven}
with the assumptions $x,y\ge1$ and $X,Y\ge3$. In this diagram we
omitted the moves of the first player for which the second player
has an immediate winning response that ends the game.
\end{proof}

\begin{figure}[h!]
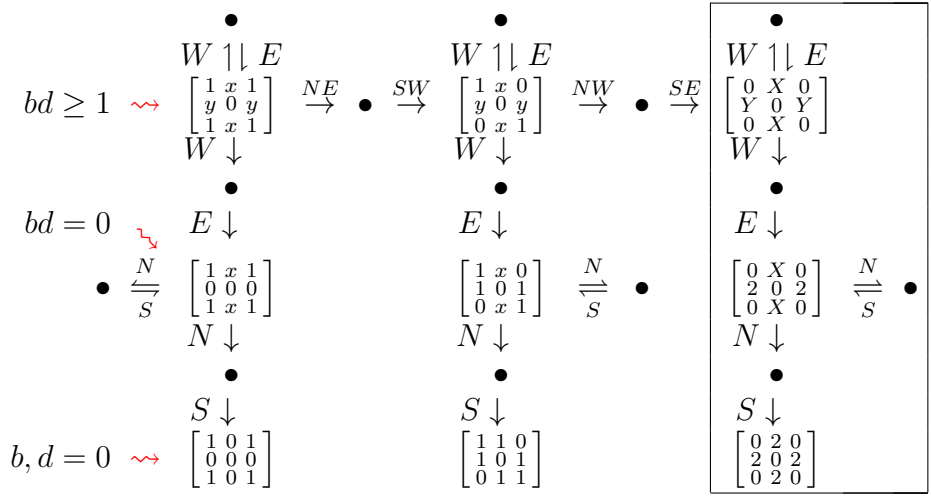

\centerline{\setlength{\tabcolsep}{.3em}
\renewcommand{\arraystretch}{1}%
\begin{tabular}{crccccccccc|ccc|}
\cline{12-14} \cline{13-14} \cline{14-14} 
 &  &  & $\bullet$ &  &  &  & $\bullet$ &  &  &  & $\bullet$ &  & \tabularnewline
 &  &  & $W\upharpoonleft\downharpoonright E$ &  &  &  & $W\upharpoonleft\downharpoonright E$ &  &  &  & $W\upharpoonleft\downharpoonright E$ &  & \tabularnewline
 & $bd\ge1$ & \textcolor{cyan}{${\color{red}\rightsquigarrow}$} & $\left[\begin{smallmatrix}1 & x & 1\\
y & 0 & y\\
1 & x & 1
\end{smallmatrix}\right]$ & $\stackrel{NE}{\to}$ & $\bullet$ & $\stackrel{SW}{\to}$ & $\left[\begin{smallmatrix}1 & x & 0\\
y & 0 & y\\
0 & x & 1
\end{smallmatrix}\right]$ & $\stackrel{NW}{\to}$ & $\bullet$ & $\stackrel{SE}{\to}$ & $\left[\begin{smallmatrix}0 & X & 0\\
Y & 0 & Y\\
0 & X & 0
\end{smallmatrix}\right]$ &  & \tabularnewline
 &  &  & $W\downarrow\phantom{N}$ &  &  &  & $W\downarrow\phantom{N}$ &  &  &  & $W\downarrow\phantom{N}$ &  & \tabularnewline
 &  &  & $\bullet$ &  &  &  & $\bullet$ &  &  &  & $\bullet$ &  & \tabularnewline
 & $bd=0$ & \textcolor{red}{\rotatebox{-45}{$\rightsquigarrow$}} & $E\downarrow\phantom{N}$ &  &  &  & $E\downarrow\phantom{N}$ &  &  &  & $E\downarrow\phantom{N}$ &  & \tabularnewline
 & $\bullet$ & $\stackrel[S]{N}{\leftrightharpoons}$ & $\left[\begin{smallmatrix}1 & x & 1\\
0 & 0 & 0\\
1 & x & 1
\end{smallmatrix}\right]$ &  &  &  & $\left[\begin{smallmatrix}1 & x & 0\\
1 & 0 & 1\\
0 & x & 1
\end{smallmatrix}\right]$ & $\stackrel[S]{N}{\rightleftharpoons}$ & $\bullet$ &  & $\left[\begin{smallmatrix}0 & X & 0\\
2 & 0 & 2\\
0 & X & 0
\end{smallmatrix}\right]$ & $\stackrel[S]{N}{\rightleftharpoons}$ & $\bullet$\tabularnewline
 &  &  & $N\downarrow\phantom{N}$ &  &  &  & $N\downarrow\phantom{N}$ &  &  &  & $N\downarrow\phantom{N}$ &  & \tabularnewline
 &  &  & $\bullet$ &  &  &  & $\bullet$ &  &  &  & $\bullet$ &  & \tabularnewline
 &  &  & $S\downarrow\phantom{N}$ &  &  &  & $S\downarrow\phantom{N}$ &  &  &  & $S\downarrow\phantom{N}$ &  & \tabularnewline
 & $b,d=0$ & \textcolor{cyan}{${\color{red}\rightsquigarrow}$} & $\left[\begin{smallmatrix}1 & 0 & 1\\
0 & 0 & 0\\
1 & 0 & 1
\end{smallmatrix}\right]$ &  &  &  & $\left[\begin{smallmatrix}1 & 1 & 0\\
1 & 0 & 1\\
0 & 1 & 1
\end{smallmatrix}\right]$ &  &  &  & $\left[\begin{smallmatrix}0 & 2 & 0\\
2 & 0 & 2\\
0 & 2 & 0
\end{smallmatrix}\right]_{\vphantom{\Int_{\Int}}}$ &  & \tabularnewline
\cline{12-14} \cline{13-14} \cline{14-14} 
\end{tabular}}

\caption{\label{fig:TEReven}Case analysis diagram describing a strategy for the second
player in $\protect\TER(M)$. We assume $x,y\ge1$ and $X,Y\ge3$.}
\end{figure}

\begin{rem}
\label{rem:TEReven}The second player's strategy in the previous proof
can be described as: 
\begin{enumerate}
\item Win if possible.
\item Otherwise, select the central reflection of the first player's selection.
\end{enumerate}
\end{rem}

To handle the $\pty(mn)=1$ case, we generalize this \emph{central
reflection strategy} to a broader setting.
\begin{cor}
(CR) If $e$ is even, then $\nim(\TER(M))=0$ for $M=\left[\begin{smallmatrix}a & b & c\\
d & e & d\\
c & b & a
\end{smallmatrix}\right]$.
\end{cor}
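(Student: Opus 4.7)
The plan is to invoke Proposition~\ref{prop:TERmiddle} to reduce to the case $e=0$ and then generalize the argument of Proposition~\ref{prop:TERmatrixEven}. Since $e$ is even, $\pty(e)=0$, so Proposition~\ref{prop:TERmiddle} gives
\[
\nim(\TER(M)) \;=\; \nim\!\left(\TER\!\left(\left[\begin{smallmatrix}a & b & c\\ d & 0 & d\\ c & b & a\end{smallmatrix}\right]\right)\right).
\]
Writing $M'$ for the matrix on the right, it therefore suffices to exhibit a winning strategy for the second player in $\TER(M')$.

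The strategy is the one summarized in Remark~\ref{rem:TEReven}: on each turn, win immediately if possible, and otherwise respond to the first player's move at cell $(i,j)$ by moving at the central-reflection cell $(4-i,4-j)$. Because $M'$ is centrally symmetric with a $0$ at its center, every non-central cell has a distinct mirror partner of equal value, so the mirror response is always available whenever the first player does not end the game on their move. A move-and-mirror pair decreases each of the two affected edge sums by exactly one in a symmetric manner, so after the second player moves the position is again centrally symmetric; in particular $r_1=r_3$ and $c_1=c_3$ persist throughout.

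The verification that this strategy wins then follows a case analysis in the style of Figure~\ref{fig:TEReven}, now parametrized by general corners $a,c$ instead of the special choice $a=c=1$. One would classify positions by how many of $a,b,c,d$ vanish and enumerate, up to the symmetry group of $M'$, the first player's moves that do not themselves terminate the game, checking in each sub-case that the mirror response leaves a legal move for the opponent. The main obstacle will be the sub-cases in which, after several moves, some edge sum has been reduced to $1$; from such positions the first player would have a terminating move if not preempted. These are exactly the configurations where the ``win if possible'' clause of the strategy must take over, and the case analysis diagram must be organized so that every such dangerous configuration is either preceded by a terminating second-player move or sidestepped by the mirror response.
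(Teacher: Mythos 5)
Your reduction via Proposition~\ref{prop:TERmiddle} and your choice of strategy (win if possible, otherwise reflect through the center) are exactly what the paper intends: it states this result as an unproved corollary of the strategy in Remark~\ref{rem:TEReven}, so you are not missing any idea the authors supply. The weakness is that your final paragraph only describes the verification instead of performing it, and it frames that verification as a Figure~\ref{fig:TEReven}-style case diagram over the parameters $a,b,c,d$, which is both more machinery than needed and never actually constructed. The argument closes with a short invariant instead: after every second-player move (and at the start) the position is centrally symmetric with every edge sum at least $2$. Granting this, a first-player move lowers at most two edge sums by $1$ each, so it can never terminate the game; if it lowers some edge sum to $1$, that row or column now contains a single entry equal to $1$, so the ``win if possible'' clause zeroes it and the second player makes the last move; otherwise all edge sums remain at least $2$, the mirrored cell is positive because it equaled the just-decremented cell before the move, and mirroring restores the invariant. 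Since the total of the entries drops by $2$ per round, the game terminates, and only the second player ever makes a terminating move. This replaces your ``dangerous configurations must be sidestepped'' worry with the observation that every dangerous configuration arises on the second player's turn and is immediately winning for them.

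A caveat that applies both to your write-up and to the corollary as printed: some hypothesis on the edge sums is required. If $a+b+c=1$ (for instance $a=1$ and $b=c=d=e=0$), the first player zeroes the top row on move one and wins, so $\nim(\TER(M))\ne 0$. The statement must be read with the standing assumption that every edge sum is at least $2$ (or the position is already terminal), as is made explicit in the (HR) proposition; every invocation of (CR) elsewhere in the paper satisfies this. Your proof should state that assumption, since the invariant argument above uses it as its base case.
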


The \emph{horizontal reflection strategy} uses a similar idea with
a small twist.
\begin{prop}
(HR) If $e$ is even, \textup{$o$, $\tilde{o}$ are odd, $\tilde{o}\ge3$,
and the edge sums are at least $2$,} then $\nim(\TER(M))=0$ for
$M=\left[\begin{smallmatrix}a & b & c\\
e & o & \tilde{o}\\
a & b & c
\end{smallmatrix}\right]$.
\end{prop}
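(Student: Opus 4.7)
The plan is to mimic Proposition~\ref{prop:TERmatrixEven}, exhibiting a winning strategy for the second player by a case analysis diagram in the style of Figure~\ref{fig:TEReven}. First, I would apply Proposition~\ref{prop:TERmiddle} to replace $o$ by $\pty(o) = 1$, reducing the problem to the matrix $\left[\begin{smallmatrix}a & b & c\\ e & 1 & \tilde{o}\\ a & b & c\end{smallmatrix}\right]$ under the same hypotheses. Because this matrix is horizontally (rather than centrally) symmetric, I would adopt a \emph{horizontal reflection} strategy for the second player, engineered so that after each of Player 2's moves the matrix retains horizontal symmetry (rows 1 and 3 equal) together with an appropriate parity invariant on row 2.

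The strategy for Player 2 will mirror the recipe of Remark~\ref{rem:TEReven}: (i) win if possible; (ii) if Player 1 moved in row 1 or row 3, play the horizontally reflected move in the opposite row; (iii) if Player 1 moved in row 2, play a designated response in row 2 that restores the invariant (for instance, answering a move on $e$ or $\tilde{o}$ by another move on the same entry, and answering the single possible move on $o$ by a move on $\tilde{o}$ or $e$ depending on the current sub-case). The verification would proceed by case analysis on Player 1's move, tracking how each branch affects the four edge sums and checking that the reply is always legal and never hands Player 1 a terminal option. The hypotheses enter exactly at these checks: $\tilde{o}\ge 3$ gives $\tilde{o}$ enough cushion to survive the first pair of decrements while remaining available as a response, and the edge-sum lower bound of $2$ prevents Player 1 from forcing an immediate terminating reply by Player 2.

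The main obstacle I anticipate is designing the row 2 responses so that the invariant survives once some individual entry in row 2 reaches $0$, in particular after $o$ has been spent (this is where the asymmetry between $c_1$ and $c_3$ becomes visible). As in Figure~\ref{fig:TEReven}, the diagram will branch on how many of the corner, edge, and row 2 entries have been exhausted, and each leaf should either be obviously winning for Player 2 or reduce to a previously handled case such as the CR corollary. I expect the resulting diagram to have the general layout of Figure~\ref{fig:TEReven} but with extra branches accounting for the asymmetry between $e$ (which contributes to $c_1$) and $\tilde{o}$ (which contributes to $c_3$).
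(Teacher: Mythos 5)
Your overall plan---horizontal reflection on rows $1$ and $3$, a ``win if possible'' override, and a special pairing rule for row $2$---has exactly the shape of the paper's proof, and the preliminary reduction of $o$ to $\pty(o)=1$ via Proposition~\ref{prop:TERmiddle} is a legitimate (if unnecessary) simplification. The gap is in the row-$2$ pairing you chose as your default. Answering every move on $\tilde{o}$ by another move on $\tilde{o}$ keeps $\tilde{o}$ odd after each of your moves, since it starts odd. Now take the sub-case $c=0$, so that $c_{3}=\tilde{o}$. The first player simply decrements $\tilde{o}$ on every turn and never touches $o$ (legal, since $o$ contributes to no edge sum, so the first player is never obliged to spend it); your replies drive $\tilde{o}$ through $\tilde{o}-2,\tilde{o}-4,\dots,1$, all odd, and from the position with $\tilde{o}=1$ the first player moves $c_{3}$ to $0$ and wins. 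The hypothesis $\tilde{o}\ge3$ does not rescue this line; it only delays it. The symmetric defect appears if you ever answer the single $o$-move with an $e$-move: $e$ starts even, and making it odd lets the first player win the column $c_{1}=2a+e$ in the sub-case $a=0$.

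The repair is to make the row-$2$ response parity-driven rather than entry-driven, which is what the paper's strategy does: if after the first player's move $M_{2,2}$ and $M_{2,3}$ have different parities, decrement whichever of them is odd; otherwise reflect (a move on $e=M_{2,1}$ is its own horizontal reflection, so it is answered on $e$). Concretely, this answers the \emph{first} decrement of $\tilde{o}$ by a decrement of $o$ (and vice versa); thereafter both are even and every later move on either is answered on the same entry, so that you, not your opponent, are the one who brings $\tilde{o}$ (hence $c_{3}$ when $c=0$) and $e$ (hence $c_{1}$ when $a=0$) to $0$. You correctly identified the row-$2$ pairing as the main obstacle, but the default you propose is the one that fails, and this is precisely the point the argument turns on. Once it is fixed, the rest of your verification (rows paired by reflection, the override when a row sum reaches $1$, and $\tilde{o}\ge3$ ensuring the opening exchange cannot hand the first player the position noted after the proposition) goes through as in the paper; no elaborate diagram in the style of Figure~\ref{fig:TEReven} is needed, since the three-rule strategy can be checked directly.
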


\begin{proof}
It is easy to see that the second player wins using the following
strategy: 
\begin{enumerate}
\item If the top or bottom edge sum is $1$, then make the winning move
on that edge.
\item If $M_{2,2}$ and $M_{2,3}$ have different parities, then select
the odd entry to make both of them even.
\item Otherwise, select the horizontal reflection of the first player's
selection.
\end{enumerate}
\end{proof}
Note that the $\tilde{o}\ge3$ requirement is necessary, since otherwise
the second player can create the losing position 
\[
\left[\begin{smallmatrix}1 & b & 0\\
e & o & 1\\
1 & b & 0
\end{smallmatrix}\right].
\]

We need two more results to handle common strategies called \emph{middle
games A and B}. Both of these strategies rely on the central reflection
strategy (CR) as an endgame. 
\begin{prop}
(MA) If $b\ge2$\textup{,} then $\nim(\TER(M))=1$ for $M=\left[\begin{smallmatrix}1 & b & 1\\
0 & 1 & 0\\
1 & b & 1
\end{smallmatrix}\right]$.
\end{prop}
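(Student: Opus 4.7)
The plan is to establish $\nim(\TER(M))=1$ in two parts, mirroring the two conditions defining $\mex=1$.

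For the lower bound $\nim(\TER(M))\geq 1$, I would exhibit a first-player move to a nim-$0$ position: reduce the middle entry $e$ from $1$ to $0$, yielding
\[
M_0=\left[\begin{smallmatrix}1 & b & 1\\ 0 & 0 & 0\\ 1 & b & 1\end{smallmatrix}\right].
\]
Since $M_0$ is centrally symmetric with $e=0$ even, the central reflection corollary~(CR) gives $\nim(\TER(M_0))=0$. Hence $0$ lies among the option nim-values of $\TER(M)$, so $\nim(\TER(M))\geq 1$.

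For the upper bound $\nim(\TER(M))\leq 1$, I must show no option of $\TER(M)$ has nim-value exactly $1$. Using the horizontal, vertical, and central symmetries of $M$, the non-middle first-player moves split into two equivalence classes up to symmetry: a corner move (say $M_{1,1}\colon 1\to 0$) and an edge move (say $M_{1,2}\colon b\to b-1$). I propose to record the verification in a case-analysis diagram in the spirit of Figure~\ref{fig:TEReven}, showing that in each resulting sub-position the player to move can reach both a nim-$0$ option (via an immediate terminating move on an edge sum that has just dropped to $1$, or via a mirror response landing in a CR-shaped sub-position) and a nim-$1$ option (via a reduction to a smaller MA sub-configuration). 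Because both $0$ and $1$ appear among the option nim-values of each non-middle sub-position, its mex is at least $2$, so it is not $1$.

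The main obstacle is the upper-bound case analysis. Even though the winning first-player move is transparent, the example in Figure~\ref{fig:quest} warns that a nim-$0$ position extended by a delay move need not have nim-value $1$, so certifying the exact value requires witnessing both $0$ and $1$ among options of each non-middle sub-position. The critical structural observations are that reducing a corner entry to $0$ makes the adjacent edge sum equal to $1$ (so a terminating nim-$0$ response is always available), and that reducing $b$ by $1$ on one side breaks central symmetry in a way that can be repaired by a mirror response, producing MA- or CR-type sub-positions whose nim-values are $1$ or $0$ respectively. Combining these with an induction on $b$ (with the small base cases $b=2$ handled directly by enumeration) yields all the verifications the diagram requires.
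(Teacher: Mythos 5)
Your lower-bound half is correct and matches the paper's opening move: reducing the central entry to $0$ yields the centrally symmetric matrix $\left[\begin{smallmatrix}1 & b & 1\\ 0 & 0 & 0\\ 1 & b & 1\end{smallmatrix}\right]$ with even center, so (CR) gives a nim-$0$ option and hence $\nim(\TER(M))\ge1$. Your upper-bound half, however, diverges from the paper --- which proves the whole proposition at once by giving the second player a pairing strategy in $\TER(M)+*1$ (Figure~\ref{fig:MA}), thereby only ever needing that each option has nim-value $\ne 1$ --- and it is in this half that the proposal has a genuine gap. You replace the needed claim ``every non-middle option has nim-value $\ne 1$'' with the strictly stronger claim that every such option has nim-value $\ge 2$, certified by exhibiting both a nim-$0$ and a nim-$1$ option of it; but the witnesses you name do not exist in the two branches you identify.

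Concretely: after the corner move $M_{1,1}\colon 1\to 0$, the resulting position $\left[\begin{smallmatrix}0 & b & 1\\ 0 & 1 & 0\\ 1 & b & 1\end{smallmatrix}\right]$ has its $(1,1)$ entry equal to $0$ forever, since moves only decrease entries; hence no position of the MA shape $\left[\begin{smallmatrix}1 & * & 1\\ 0 & 1 & 0\\ 1 & * & 1\end{smallmatrix}\right]$ (nor of the MB shape, whose middle row is $1,1,1$) is reachable, so your proposed source of a nim-$1$ option --- ``a reduction to a smaller MA sub-configuration'' --- is unavailable exactly where you need it. Symmetrically, after the edge move the position $B=\left[\begin{smallmatrix}1 & b-1 & 1\\ 0 & 1 & 0\\ 1 & b & 1\end{smallmatrix}\right]$ has all edge sums at least $2$, so there is no immediate terminating response, and no single move from $B$ produces a matrix to which (CR) applies: one move can either equalize the two middle-column entries (leaving the odd center $1$) or zero the center (leaving $b-1\ne b$), but not both. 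So neither of your two mechanisms supplies the required nim-$0$ option of $B$, and you have not ruled out that $\nim(B)=0$ rather than $\ge 2$. To repair the argument you would either have to find and verify genuinely different witnesses position by position, or retreat to the weaker target the paper uses: show only that the second player wins $O+*1$ for each non-middle option $O$, which the mirroring responses in the case analysis diagram accomplish without ever computing the exact nim-values of these sub-positions.
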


\begin{proof}
The second player wins $\TER(M)+*1$ using the strategy depicted in
the case analysis diagram of Figure~\ref{fig:MA} with the assumption $x\ge2$. An arrow indicating
the selection of the central entry is labeled by $C$ in the diagram.
\end{proof}

\begin{figure}[h!]
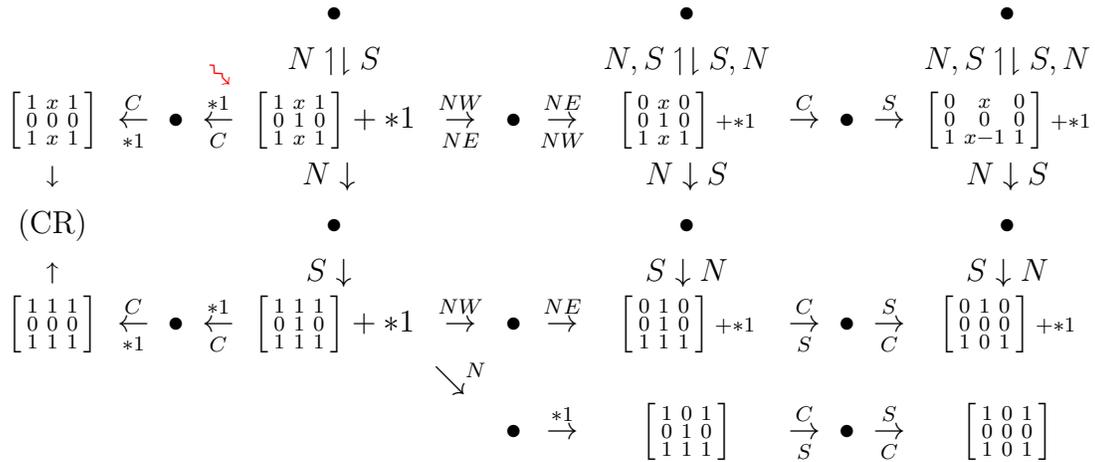

\centerline{\setlength{\tabcolsep}{.3em}
\renewcommand{\arraystretch}{1.3}

\begin{tabular}{ccccccccccccc}
 &  &  &  & $\bullet$ &  &  &  & $\bullet$ &  &  &  & $\bullet$\tabularnewline
 &  &  & \textcolor{red}{\rotatebox{-45}{$\rightsquigarrow$}} & $N\upharpoonleft\downharpoonright S$ &  &  &  & $N,S\upharpoonleft\downharpoonright S,N$ &  &  &  & $N,S\upharpoonleft\downharpoonright S,N$\tabularnewline
$\left[\begin{smallmatrix}1 & x & 1\\
0 & 0 & 0\\
1 & x & 1
\end{smallmatrix}\right]$ & $\stackrel[*1]{C}{\leftarrow}$ & $\bullet$ & $\stackrel[C]{*1}{\leftarrow}$ & $\left[\begin{smallmatrix}1 & x & 1\\
0 & 1 & 0\\
1 & x & 1
\end{smallmatrix}\right]+*1$ & $\stackrel[NE]{NW}{\to}$ & $\bullet$ & $\stackrel[NW]{NE}{\to}$ & $\left[\begin{smallmatrix}0 & x & 0\\
0 & 1 & 0\\
1 & x & 1
\end{smallmatrix}\right]{\scriptstyle +*1}$ & $\overset{C}{\to}$ & $\bullet$ & $\overset{S}{\to}$ & $\left[\begin{smallmatrix}0 & x & 0\\
0 & 0 & 0\\
1 & x-1 & 1
\end{smallmatrix}\right]{\scriptstyle +*1}$\tabularnewline
$\shortdownarrow$ &  &  &  & $N\downarrow\,\,$ &  &  &  & $N\downarrow S$ &  &  &  & $N\downarrow S$\tabularnewline
(CR) &  &  &  & $\bullet$ &  &  &  & $\bullet$ &  &  &  & $\bullet$\tabularnewline
$\shortuparrow$ &  &  &  & $S\downarrow\,\,$ &  &  &  & $S\downarrow N$ &  &  &  & $S\downarrow N$\tabularnewline
$\left[\begin{smallmatrix}1 & 1 & 1\\
0 & 0 & 0\\
1 & 1 & 1
\end{smallmatrix}\right]$ & $\stackrel[*1]{C}{\leftarrow}$ & $\bullet$ & $\stackrel[C]{*1}{\leftarrow}$ & $\left[\begin{smallmatrix}1 & 1 & 1\\
0 & 1 & 0\\
1 & 1 & 1
\end{smallmatrix}\right]+*1$ & $\stackrel[\,]{NW}{\to}$ & $\bullet$ & $\stackrel[\,]{NE}{\to}$ & $\left[\begin{smallmatrix}0 & 1 & 0\\
0 & 1 & 0\\
1 & 1 & 1
\end{smallmatrix}\right]{\scriptstyle +*1}$ & $\stackrel[S]{C}{\to}$ & $\bullet$ & $\stackrel[C]{S}{\to}$ & $\left[\begin{smallmatrix}0 & 1 & 0\\
0 & 0 & 0\\
1 & 0 & 1
\end{smallmatrix}\right]{\scriptstyle +*1}$\tabularnewline
 &  &  &  &  & $\searrow^{N}$ &  &  &  &  &  &  & \tabularnewline
 &  &  &  &  &  & $\bullet$ & $\stackrel[\,]{*1}{\to}$ & $\left[\begin{smallmatrix}1 & 0 & 1\\
0 & 1 & 0\\
1 & 1 & 1
\end{smallmatrix}\right]$ & $\stackrel[S]{C}{\to}$ & $\bullet$ & $\stackrel[C]{S}{\to}$ & $\left[\begin{smallmatrix}1 & 0 & 1\\
0 & 0 & 0\\
1 & 0 & 1
\end{smallmatrix}\right]$\tabularnewline
\end{tabular}}

\caption{\label{fig:MA}The middle game strategy (MA) for the second player
in $\protect\TER(M)+*1$. We assume $x\ge2$.}
\end{figure}

\begin{prop}
(MB) If $a\ge2$\textup{,} then $\nim(\TER(M))=1$ for $M=\left[\begin{smallmatrix}0 & a & 1\\
1 & 1 & 1\\
1 & a & 0
\end{smallmatrix}\right]$.
\end{prop}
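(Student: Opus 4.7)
The plan is to show $\nim(\TER(M))=1$ by exhibiting a winning strategy for the second player in the disjunctive sum $\TER(M)+*1$, since this forces $\nim(\TER(M))\oplus 1 = 0$ by the Sprague--Grundy theorem. The structural observation underpinning the strategy is that $M$ is centrally symmetric, $M_{i,j}=M_{4-i,4-j}$, with odd center $M_{2,2}=1$; zeroing out that center gives the matrix $M^{0}=\left[\begin{smallmatrix}0 & a & 1\\ 1 & 0 & 1\\ 1 & a & 0\end{smallmatrix}\right]$, whose edge sums are $a+1,\,a+1,\,2,\,2$ and whose nim-value is $0$ by (CR). The strategy thus couples the extra $*1$ heap with the central entry and eliminates both together in order to reduce to the (CR) endgame.

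I would encode the strategy in a case analysis diagram analogous to Figure~\ref{fig:MA}. If the first player removes $*1$, the second player plays the center $(2,2)$; if the first player plays $(2,2)$, the second player removes $*1$; either way the resulting position is $M^{0}$, of nim-value $0$. For every other first-player move, namely to a corner $(1,3)$ or $(3,1)$, an inner edge entry $(2,1)$ or $(2,3)$, or an outer edge entry $(1,2)$ or $(3,2)$, the second player responds with the centrally symmetric mirror of that move. This restores central symmetry while leaving $*1$ and the coupled center alive; any subsequent first-player move then either terminates the matrix game (the second player wins by taking $*1$), plays the center (answered by $*1$, reaching a (CR) position), or again admits a mirror response, and the argument iterates until one of the base cases triggers.

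The main obstacle is verifying the internal consistency of this plan: the second player must never be the one to terminate the matrix game while $*1$ is still alive, since that would hand the final $*1$ move to the first player. The delicate subcases are those in which the first-player move creates an edge sum equal to $1$, namely the moves to $(1,3),(3,1),(2,1),(2,3)$. There the second player must resist the temptation either to take $*1$ or to make the obvious edge-killing move, playing instead the central mirror, which converts the single game-ending threat into a symmetric pair of threats that the first player is eventually forced to detonate. Small-parameter boundaries, for instance $a=2$ in which the mirror of the outer-edge move $(1,2)$ produces a centrally symmetric matrix with all edge sums equal to $2$ that lies outside the hypothesis $a\ge 2$ of (MB), must be closed by direct inspection within the diagram. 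As in (MA), the full verification is a finite case analysis best presented as a case analysis diagram.
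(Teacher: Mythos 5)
Your overall framing --- play $\TER(M)+*1$, couple the extra $*1$ with the odd central entry, and aim to land in a (CR) endgame --- is the same as the paper's. But the heart of your strategy, answering every non-central matrix move with its central mirror, fails, and it fails in exactly the subcases you flag as delicate. Concretely: from $\left[\begin{smallmatrix}0 & a & 1\\ 1 & 1 & 1\\ 1 & a & 0\end{smallmatrix}\right]+*1$ let the first player play $(2,1)$, dropping $c_1$ to $1$. Your mirror response $(2,3)$ yields $\left[\begin{smallmatrix}0 & a & 1\\ 0 & 1 & 0\\ 1 & a & 0\end{smallmatrix}\right]+*1$ with $c_1=c_3=1$. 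The first player now plays the center; your prescribed reply is to take $*1$, ``reaching a (CR) position.'' The resulting matrix $\left[\begin{smallmatrix}0 & a & 1\\ 0 & 0 & 0\\ 1 & a & 0\end{smallmatrix}\right]$ is indeed centrally symmetric with even center, but it has $c_3=1$, so the first player plays $(1,3)$ and wins on the spot. (CR) is only usable when no edge sum equals $1$ (otherwise the mover terminates immediately); every invocation in the paper satisfies this, but yours does not. Nor does any other reply save you there: one computes $\nim\bigl(\left[\begin{smallmatrix}0 & a & 1\\ 0 & 0 & 0\\ 1 & a & 0\end{smallmatrix}\right]\bigr)=1$ for all $a\ge0$, so the sum with $*1$ already has nim-value $0$ with your player to move, meaning the mirror $(2,3)$ was itself the losing move. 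Your claim that the first player is ``eventually forced to detonate'' one of the symmetric threats is precisely what the live pair $\{\text{center},*1\}$ refutes: it gives the first player a tempo move that breaks the mirroring parity once the two column sums equal $1$. The same collapse occurs if the first player opens with the corner $(1,3)$ and you mirror with $(3,1)$.

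This is why the paper's strategy in Figure~\ref{fig:MB} abandons central reflection on the short columns: the move $(2,1)$ is answered by $(1,3)$ (and $(1,3)$ by $(2,1)$, and symmetrically for $(2,3)$, $(3,1)$), producing the \emph{non-symmetric} position $\left[\begin{smallmatrix}0 & x & 0\\ 0 & 1 & 1\\ 1 & x & 0\end{smallmatrix}\right]+*1$, whose two threats sit at $(3,1)$ and $(2,3)$; from there the central move is answered by $(3,2)$ --- not by $*1$ --- and the analysis continues down to $\left[\begin{smallmatrix}0 & 1 & 0\\ 0 & 0 & 1\\ 1 & 0 & 0\end{smallmatrix}\right]+*1$. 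Only the long-edge moves $(1,2),(3,2)$ and the center/$*1$ pair are handled by the pairing you describe. As written, your proposal is missing this non-mirror response and the ensuing endgame, so the argument does not go through.
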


\begin{proof}
The second player wins $\TER(M)+*1$ using the strategy depicted in
the case analysis diagram of Figure~\ref{fig:MB} with the assumption $x\ge2$.
\end{proof}

\begin{figure}[h!]
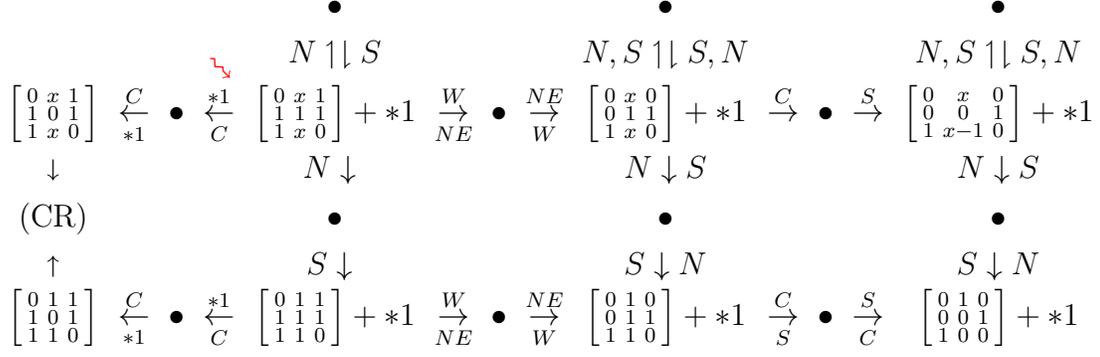

\centerline{\setlength{\tabcolsep}{.3em}
\renewcommand{\arraystretch}{1.3}

\begin{tabular}{ccccccccccccc}
 &  &  &  & $\bullet$ &  &  &  & $\bullet$ &  &  &  & $\bullet$\tabularnewline
 &  &  & \textcolor{red}{\rotatebox{-45}{$\rightsquigarrow$}} & $N\upharpoonleft\downharpoonright S$ &  &  &  & $N,S\upharpoonleft\downharpoonright S,N$ &  &  &  & $N,S\upharpoonleft\downharpoonright S,N$\tabularnewline
$\left[\begin{smallmatrix}0 & x & 1\\
1 & 0 & 1\\
1 & x & 0
\end{smallmatrix}\right]$ & $\stackrel[*1]{C}{\leftarrow}$ & $\bullet$ & $\stackrel[C]{*1}{\leftarrow}$ & $\left[\begin{smallmatrix}0 & x & 1\\
1 & 1 & 1\\
1 & x & 0
\end{smallmatrix}\right]+*1$ & $\stackrel[NE]{W}{\to}$ & $\bullet$ & $\stackrel[W]{NE}{\to}$ & $\left[\begin{smallmatrix}0 & x & 0\\
0 & 1 & 1\\
1 & x & 0
\end{smallmatrix}\right]+*1$ & $\overset{C}{\to}$ & $\bullet$ & $\overset{S}{\to}$ & $\left[\begin{smallmatrix}0 & x & 0\\
0 & 0 & 1\\
1 & x-1 & 0
\end{smallmatrix}\right]+*1$\tabularnewline
$\shortdownarrow$ &  &  &  & $N\downarrow\,\,$ &  &  &  & $N\downarrow S$ &  &  &  & $N\downarrow S$\tabularnewline
(CR) &  &  &  & $\bullet$ &  &  &  & $\bullet$ &  &  &  & $\bullet$\tabularnewline
$\shortuparrow$ &  &  &  & $S\downarrow\,\,$ &  &  &  & $S\downarrow N$ &  &  &  & $S\downarrow N$\tabularnewline
$\left[\begin{smallmatrix}0 & 1 & 1\\
1 & 0 & 1\\
1 & 1 & 0
\end{smallmatrix}\right]$ & $\stackrel[*1]{C}{\leftarrow}$ & $\bullet$ & $\stackrel[C]{*1}{\leftarrow}$ & $\left[\begin{smallmatrix}0 & 1 & 1\\
1 & 1 & 1\\
1 & 1 & 0
\end{smallmatrix}\right]+*1$ & $\stackrel[NE]{W}{\to}$ & $\bullet$ & $\stackrel[W]{NE}{\to}$ & $\left[\begin{smallmatrix}0 & 1 & 0\\
0 & 1 & 1\\
1 & 1 & 0
\end{smallmatrix}\right]+*1$ & $\stackrel[S]{C}{\to}$ & $\bullet$ & $\stackrel[C]{S}{\to}$ & $\left[\begin{smallmatrix}0 & 1 & 0\\
0 & 0 & 1\\
1 & 0 & 0
\end{smallmatrix}\right]+*1$\tabularnewline
\end{tabular}}

\caption{\label{fig:MB}The middle game strategy (MB) for the second player
in $\protect\TER(M)+*1$. We assume $x\ge2$.}
\end{figure}

Now we provide an opening strategy.
\begin{prop}
\label{prop:TERmatrixOdd}If $b$ and $d$ are odd\textup{,} then
$\nim(\TER(M))=1$ for $M=\left[\begin{smallmatrix}1 & b & 1\\
d & 1 & d\\
1 & b & 1
\end{smallmatrix}\right]$.
\end{prop}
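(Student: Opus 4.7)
The plan is to show that the second player wins $\TER(M)+*1$, which gives $\nim(\TER(M))=1$. The pivotal observation is that decreasing the middle entry of $M$ from $1$ to $0$ produces
\[
M_0 := \left[\begin{smallmatrix}1 & b & 1\\ d & 0 & d\\ 1 & b & 1\end{smallmatrix}\right],
\]
for which Proposition~\ref{prop:TERmatrixEven} gives $\nim(\TER(M_0))=0$.

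First I would dispose of the two symmetric opening moves in $\TER(M)+*1$. If the first player removes the $*1$ token, the second player takes the center of $M$, landing in $\TER(M_0)$ of nim value $0$. Symmetrically, if the first player takes the center of $M$, the second player removes the $*1$ token, again leaving $\TER(M_0)$. Both branches close via Proposition~\ref{prop:TERmatrixEven}.

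Next I would construct a case analysis diagram in the style of Figures~\ref{fig:MA} and~\ref{fig:MB} to handle the remaining first-player moves. Up to the Klein four-group of horizontal and vertical reflections of $M$, these split into three orbits: a corner, a top or bottom edge (a $b$-entry), and a left or right edge (a $d$-entry). For each orbit the second player's response is chosen to force the game toward a position whose nim value is already pinned down --- either $\TER(M_0)$ via Proposition~\ref{prop:TERmatrixEven} or a position covered by one of the endgame strategies (CR), (HR), (MA), or (MB). The natural first attempt is for the second player to play the centrally reflected entry, restoring central symmetry; from there the diagram branches either into an endgame strategy or into one further reduction, for instance taking the center to drop it to $0$ so that (CR) can be invoked on the symmetric remainder.

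The main obstacle will be completing this case analysis diagram in full generality. Because the hypotheses of (MA) and (MB) require their outer parameter to be at least $2$, the boundary cases $b=1$ or $d=1$ are not immediately covered by those lemmas and are likely to need either supplementary sub-diagrams or direct case checks by exhibiting explicit winning plays. Verifying each terminal position along the branches of the diagram as a second-player win, as in Figures~\ref{fig:MA} and~\ref{fig:MB}, is the bookkeeping-heavy step that will drive the length of the full argument.
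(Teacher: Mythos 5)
Your framework coincides with the paper's: both prove the claim by exhibiting a second-player winning strategy in $\TER(M)+*1$, both pair the center move with the $*1$ move so as to land on $\left[\begin{smallmatrix}1&b&1\\ d&0&d\\ 1&b&1\end{smallmatrix}\right]$ and invoke Proposition~\ref{prop:TERmatrixEven}, and both aim to route the remaining openings into the endgames (CR), (HR), (MA), (MB) via a case analysis diagram (the paper's Figure~\ref{fig:TERodd}). The gap is that the case analysis \emph{is} the proof, and the one generic response you propose concretely would fail. Central reflection does not work for the generic edge and corner openings: if the first player decrements a $d$-entry with $d\ge 3$, reflecting yields $\left[\begin{smallmatrix}1&b&1\\ d-1&1&d-1\\ 1&b&1\end{smallmatrix}\right]+*1$, whose center is odd and whose $d$-entries are even, so it is covered by none of (CR), (HR), (MA), (MB), nor by the statement being proved; likewise reflecting a corner move when $b,d\ge3$ gives $\left[\begin{smallmatrix}0&b&1\\ d&1&d\\ 1&b&0\end{smallmatrix}\right]+*1$, which is again outside every available lemma. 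The paper's actual responses are different: to an edge move with $d\ge3$ it answers by removing the $*1$ token, landing exactly on the (HR) position $\left[\begin{smallmatrix}1&b&1\\ d-1&1&d\\ 1&b&1\end{smallmatrix}\right]$ (with $e=d-1$ even, $o=1$, $\tilde o=d\ge3$); to a corner move with $b,d\ge3$ it also answers $*1$ and then runs a further three-branch sub-analysis of $\left[\begin{smallmatrix}0&b&1\\ d&1&d\\ 1&b&1\end{smallmatrix}\right]$ terminating in (HR), (HR) and (CR). Central reflection is reserved for the boundary case $d=1$, where it is precisely what produces the (MA) position (after an edge move) and the (MB) position (after a corner move).

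A secondary point: the difficulty you single out as the main obstacle --- that (MA) and (MB) are stated for outer parameter at least $2$ while $b=1$ or $d=1$ may occur --- is comparatively harmless, since the diagrams proving (MA) and (MB) already descend through and verify the parameter-$1$ positions. The genuinely missing content is the correct generic responses described above, not the small-case bookkeeping.
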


\begin{proof}
The second player wins $\TER(M)+*1$ using the strategy depicted in
the case analysis diagram of Figure~\ref{fig:TERodd} with the assumptions $x,y\ge1$, $X,Y\ge3$,
and $x,y,X,Y$ are odd.
\end{proof}

\begin{figure}[h!]
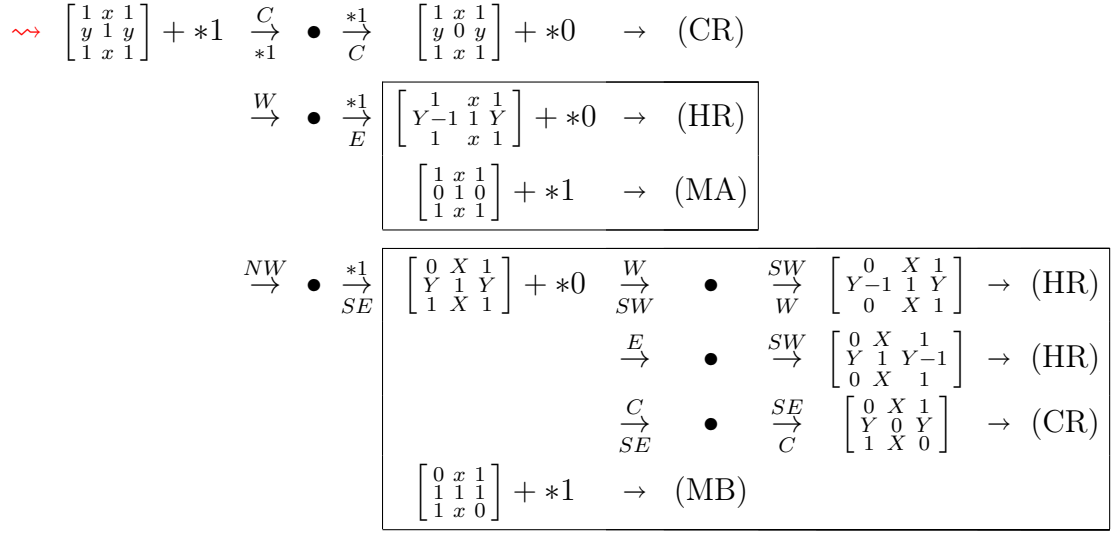

\centerline{\setlength{\tabcolsep}{.3em}
\renewcommand{\arraystretch}{.5}

\noindent %
\begin{tabular}{ccccc|ccccccc}
\textcolor{cyan}{${\color{red}\rightsquigarrow}$} & $\left[\begin{smallmatrix}1 & x & 1\\
y & 1 & y\\
1 & x & 1
\end{smallmatrix}\right]+*1$ & $\stackrel[*1]{C}{\to}$ & $\bullet$ & \multicolumn{1}{c}{$\stackrel[C]{*1}{\to}$} & $\left[\begin{smallmatrix}1 & x & 1\\
y & 0 & y\\
1 & x & 1
\end{smallmatrix}\right]+*0$ & $\shortrightarrow$ & (CR) &  &  &  & \tabularnewline
 &  &  &  & \multicolumn{1}{c}{} &  &  &  &  &  &  & \tabularnewline
\cline{6-8} \cline{7-8} \cline{8-8} 
 & \vphantom{%
\begin{tabular}{c}
\tabularnewline
\tabularnewline
\tabularnewline
\tabularnewline
\end{tabular}} & $\stackrel{W}{\to}$ & $\bullet$ & $\stackrel[E]{*1}{\to}$ & $\left[\begin{smallmatrix}1 & x & 1\\
Y-1 & 1 & Y\\
1 & x & 1
\end{smallmatrix}\right]+*0$ & $\shortrightarrow$ & \multicolumn{1}{c|}{(HR)} &  &  &  & \tabularnewline
 & \vphantom{%
\begin{tabular}{c}
\tabularnewline
\tabularnewline
\tabularnewline
\tabularnewline
\end{tabular}} &  &  &  & $\left[\begin{smallmatrix}1 & x & 1\\
0 & 1 & 0\\
1 & x & 1
\end{smallmatrix}\right]+*1$ & $\shortrightarrow$ & \multicolumn{1}{c|}{(MA)} &  &  &  & \tabularnewline
\cline{6-8} \cline{7-8} \cline{8-8} 
 &  &  &  & \multicolumn{1}{c}{} &  &  &  &  &  &  & \tabularnewline
\cline{6-12} \cline{7-12} \cline{8-12} \cline{9-12} \cline{10-12} \cline{11-12} \cline{12-12} 
 & \vphantom{%
\begin{tabular}{c}
\tabularnewline
\tabularnewline
\tabularnewline
\tabularnewline
\end{tabular}} & $\stackrel{NW}{\to}$ & $\bullet$ & $\stackrel[SE]{*1}{\to}$ & $\left[\begin{smallmatrix}0 & X & 1\\
Y & 1 & Y\\
1 & X & 1
\end{smallmatrix}\right]+*0$ & $\stackrel[SW]{W}{\to}$ & $\bullet$ & $\stackrel[W]{SW}{\to}$ & $\left[\begin{smallmatrix}0 & X & 1\\
Y-1 & 1 & Y\\
0 & X & 1
\end{smallmatrix}\right]$ & $\shortrightarrow$ & \multicolumn{1}{c|}{(HR)}\tabularnewline
 & \vphantom{%
\begin{tabular}{c}
\tabularnewline
\tabularnewline
\tabularnewline
\tabularnewline
\end{tabular}} &  &  &  &  & $\stackrel{E}{\to}$ & $\bullet$ & $\stackrel[\,]{SW}{\to}$ & $\left[\begin{smallmatrix}0 & X & 1\\
Y & 1 & Y-1\\
0 & X & 1
\end{smallmatrix}\right]$ & $\shortrightarrow$ & \multicolumn{1}{c|}{(HR)}\tabularnewline
 &  &  &  &  &  & $\stackrel[SE]{C}{\to}$ & $\bullet$ & $\stackrel[C]{SE}{\to}$ & $\left[\begin{smallmatrix}0 & X & 1\\
Y & 0 & Y\\
1 & X & 0
\end{smallmatrix}\right]$ & $\shortrightarrow$ & \multicolumn{1}{c|}{(CR)}\tabularnewline
 & \vphantom{%
\begin{tabular}{c}
\tabularnewline
\tabularnewline
\tabularnewline
\tabularnewline
\end{tabular}} &  &  &  & $\left[\begin{smallmatrix}0 & x & 1\\
1 & 1 & 1\\
1 & x & 0
\end{smallmatrix}\right]+*1$ & $\shortrightarrow$ & (MB) &  &  &  & \multicolumn{1}{c|}{}\tabularnewline
\cline{6-12} \cline{7-12} \cline{8-12} \cline{9-12} \cline{10-12} \cline{11-12} \cline{12-12} 
\end{tabular}}

\caption{\label{fig:TERodd}A winning strategy for the second player in $\protect\TER(M)+*1$.
We assume $x,y\ge1$, $X,Y\ge3$, and $x,y,X,Y$ are odd.}
\end{figure}

We now have enough tools to present our main result about grid games.
\begin{prop}
For each grid graph, $\nim(\TER(P_{m}\Box P_{n}))=\pty(mn)$.
\end{prop}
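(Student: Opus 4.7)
The plan is to assemble the pieces already developed and reduce everything to one of the two matrix computations that have been done. By Corollary~\ref{cor:alpha}, we have $\nim(\TER(P_m\Box P_n)) = \nim(\TER(M))$, where
\[
M = \left[\begin{smallmatrix}1 & b & 1 \\ d & e & d \\ 1 & b & 1\end{smallmatrix}\right]
\]
with $b = n-2$, $d = m-2$, and $e = (m-2)(n-2)$. Proposition~\ref{prop:TERmiddle} then lets us collapse the middle entry to its parity, so it suffices to compute $\nim(\TER(M'))$ where $M'$ is obtained from $M$ by replacing $e$ with $\pty(e)$.

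The key arithmetic observation is that $\pty(e) = \pty(mn)$: indeed, $(m-2)(n-2)$ is odd iff both $m-2$ and $n-2$ are odd iff both $m$ and $n$ are odd iff $mn$ is odd. Moreover, in the odd case this simultaneously forces $b = n-2$ and $d = m-2$ to be odd, which is precisely the additional hypothesis needed to invoke Proposition~\ref{prop:TERmatrixOdd}.

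The proof then splits into two cases. If $\pty(mn) = 0$, the middle entry of $M'$ is $0$ and Proposition~\ref{prop:TERmatrixEven} gives $\nim(\TER(M')) = 0 = \pty(mn)$. If $\pty(mn) = 1$, the middle entry of $M'$ is $1$ and $b, d$ are both odd, so Proposition~\ref{prop:TERmatrixOdd} gives $\nim(\TER(M')) = 1 = \pty(mn)$. In either case the conclusion follows.

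There is no real obstacle here: all of the genuine game-theoretic content — the central reflection, horizontal reflection, and middle game strategies encoded in the case analysis diagrams of Figures~\ref{fig:TEReven} and~\ref{fig:TERodd} — has already been carried out in the earlier propositions. This final proposition is essentially a packaging step that combines the parity-matching observation above with the two matrix results to produce a uniform formula over all grid graphs.
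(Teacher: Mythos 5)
Your proof is correct and follows essentially the same route as the paper's: reduce to the matrix game via Corollary~\ref{cor:alpha}, replace the middle entry by its parity via Proposition~\ref{prop:TERmiddle}, and split into the two cases handled by Propositions~\ref{prop:TERmatrixEven} and~\ref{prop:TERmatrixOdd}. Your explicit verification that $\pty((m-2)(n-2))=\pty(mn)$ and that $b,d$ are both odd exactly in the odd case is a slightly more careful writeup of the parity bookkeeping the paper leaves implicit.
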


\begin{proof}
Corollary~\ref{cor:alpha} implies that $\nim(\TER(P_{m}\Box P_{n}))=\TER(M)$,
where 
\[
M=\left[\begin{smallmatrix}1 & b & 1\\
d & e & d\\
1 & b & 1
\end{smallmatrix}\right]
\]
is the corresponding matrix. We can replace $e$ by $\pty(e)$ by
Proposition~\ref{prop:TERmiddle}. 

If $\pty(mn)=0$, then $b$ or $d$ is even and $e=0$, so the result
follows from Proposition~\ref{prop:TERmatrixEven}. If $\pty(mn)=1$,
then $b$ and $d$ are odd and $e=1$, so the result follows from
Proposition~\ref{prop:TERmatrixOdd}.
\end{proof}

\section{Lattice games}

Let $\Gamma=P_{n_{1}}\Box\cdots\Box P_{n_{d}}$ be a lattice graph. Our option preserving function $\alpha$ maps game positions to $d$-dimensional tensors in $\mathbb{W}^{3\times\cdots\times 3}$. The matrix games generalize to tensor games. In this generalization, the set $S$ contains face sums instead of edge sums.

\begin{prop}
For each lattice graph, $\nim(\DNT(\Gamma))=\pty(n_{1}\cdots n_{d})$.
\end{prop}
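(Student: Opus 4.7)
The approach generalizes the proof of Proposition~\ref{prop:DNT} from matrices to $d$-dimensional tensors. I would first extend $\alpha$ to an option preserving map $\DNT(\Gamma) \to \DNT(T_0)$, where $T_0 \in \mathbb{W}^{3\times\cdots\times 3}$ has entry at multi-index $(i_1,\ldots,i_d)$ equal to the number of unselected vertices $(x_1,\ldots,x_d)$ with $x_k = 1$, $1 < x_k < n_k$, or $x_k = n_k$ according as $i_k$ is $1$, $2$, or $3$. A face sum in $T_0$ is obtained by fixing one index to $1$ or to $3$ and summing over the remaining ones, matching the characterization that $[V \setminus P] = V$ iff every boundary face of $\Gamma$ retains an unselected vertex. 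The verification that $\alpha$ is option preserving proceeds exactly as in Proposition~\ref{prop:alpha}, so $\nim(\DNT(\Gamma)) = \nim(\DNT(T_0))$.

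Next I would generalize Proposition~\ref{prop:DNTmiddle}. The unique interior entry $T_{2,\ldots,2}$ has value $e := \prod_{k=1}^{d}(n_k - 2)$ and appears in no face sum, so decrementing it is always legal. This yields the gamegraph isomorphism
\[
\DNT(T_0) \cong \DNT(T_0') \,\Box\, \DD_e,
\]
where $T_0'$ has its central entry set to $0$ and agrees with $T_0$ elsewhere. By the Sprague--Grundy theorem, together with the fact that $n_k$ and $n_k - 2$ share parity,
\[
\nim(\DNT(T_0)) = \nim(\DNT(T_0')) \oplus \pty(n_1 \cdots n_d).
\]
It therefore suffices to prove $\nim(\DNT(T_0')) = 0$.

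This I would establish via a central reflection strategy. The non-middle entry $\prod_{k: i_k = 2}(n_k - 2)$ is invariant under the reflection $(i_1,\ldots,i_d) \mapsto (4 - i_1,\ldots,4 - i_d)$, so $T_0'$ is centrally symmetric. The second player mirrors a first-player move at $(j_1,\ldots,j_d)$ by playing at $(4 - j_1,\ldots,4 - j_d)$. The middle index is the unique fixed point of the reflection and its entry is zero, so the two indices differ; central symmetry before the first-player move guarantees the mirror entry is available (it equals the first-player entry), and central symmetry is restored after the pair of moves. The main obstacle is legality: opposite face sums $F_{k,1}$ and $F_{k,3}$ coincide by central symmetry, and for each coordinate $k$ the first-player and mirror moves decrement $F_{k, j_k}$ and $F_{k, 4 - j_k}$ respectively, or leave both unchanged when $j_k = 2$. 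Hence whenever the first-player move leaves all face sums $\ge 1$, so does the mirror response. This reduces to the same per-coordinate bookkeeping used implicitly in Proposition~\ref{prop:DNT}, completing the argument.
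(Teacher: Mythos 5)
Your proposal is correct and follows essentially the same route as the paper: the paper's proof simply invokes the central reflection strategy of Proposition~\ref{prop:DNT}, which itself consists of splitting off the central entry as a disjunctive sum with $\DD_e$ (the tensor analogue of Proposition~\ref{prop:DNTmiddle}) and then mirroring through the center on the reduced game. Your write-up just makes explicit the legality bookkeeping (paired face sums and the availability of the mirror entry) that the paper leaves implicit.
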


\begin{proof}
The central reflection strategy used in Proposition~\ref{prop:DNT}
works for tensor games.
\end{proof}

Just like in Proposition~~\ref{prop:TERmiddle}, the tensor game $\TER(M)$ is isomorphic to $\TER(N){\delay}\mathsf{D}_{e}$, where $N$ is built from $M$ by replacing the central entry $e$ by $0$.  

\begin{prop}
If $\pty(n_{1}\cdots n_{d})=0$, then $\nim(\TER(\Gamma))=0$.
\end{prop}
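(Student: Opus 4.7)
The plan is to extend the argument of Proposition~\ref{prop:TERmatrixEven} to the tensor setting. First, by the tensor analog of Corollary~\ref{cor:alpha}, it suffices to show $\nim(\TER(M))=0$ for the centrally symmetric tensor $M$ whose $2^d$ corner entries equal $1$ and whose center entry is $e=\prod_{i=1}^d(n_i-2)$. The hypothesis $\pty(n_1\cdots n_d)=0$ forces some $n_i$ to be even, hence $e$ is even. Applying the tensor analog of Proposition~\ref{prop:TERmiddle} (stated just above this proposition) together with Corollary~\ref{cor:delay}, this further reduces to proving $\nim(\TER(N))=0$, where $N$ is obtained from $M$ by replacing the center entry with $0$.

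For this, I would exhibit the central reflection strategy for the second player, directly generalizing the (CR) strategy of Section~4: first, win if possible; otherwise, decrement the centrally reflected entry of the first player's selection. This maintains the invariant that the tensor is centrally symmetric with center entry $0$ whenever it is the first player's turn. Legality of the reflection move follows since the first player cannot play at the center (value $0$) and the centrally reflected entry shares the value of the played entry by central symmetry of the pre-move state.

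The crux of the argument is showing that the first player never has a terminating move when it is their turn. On the opening move, each face of $N$ contains $2^{d-1}\ge 2$ corner entries equal to $1$, so every face sum is at least $2$. For later turns, suppose at some first-player turn the state is centrally symmetric and admits a terminating move at $p$ on a face $F$, so that $F$ has sum $1$ with $p$ its unique positive entry. Central symmetry then gives that the opposite face $F'$ also has sum $1$ with $p'$ its unique positive entry. I would then run a short case analysis on whether the second player's previous move $s_k$ lay in $F$, in $F'$, or in neither; in every case, at least one of $F$ or $F'$ already had sum $1$ with a unique positive entry of value $1$ at the start of the second player's previous turn. Hence the second player had an immediate winning move there, contradicting the choice of the reflection clause over \emph{win if possible}.

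Finally, an analogous symmetry argument shows the reflection move itself never accidentally terminates the game: if it did, the first player's preceding move would already have terminated the opposite face, contradicting that it is the second player's turn. Since the total tensor sum strictly decreases with each move, the game is finite, and by the above it ends on a second-player turn via the \emph{win if possible} clause, giving $\nim(\TER(N))=0$. The main obstacle will be the case analysis establishing that the first player cannot terminate, but the centrally symmetric structure makes each case short and uniform in $d$.
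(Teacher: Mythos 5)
Your proposal is correct and takes essentially the same route as the paper: reduce to the tensor $N$ with central entry $0$ via the delay-move identity (the center $\prod_i(n_i-2)$ is even because some $n_i$ is even), and then have the second player win $\TER(N)$ by the central reflection strategy of Remark~\ref{rem:TEReven}. The only difference is that you spell out the symmetry case analysis showing the first player never has a terminating move, which the paper leaves implicit by appealing to the two-dimensional case analysis diagram.
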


\begin{proof}
The central entry $e$ of the corresponding tensor $M$ is even, so $\nim(\TER(\Gamma))=\nim(\TER(N))=0$ since the second player can win $\TER(N)$ using the central reflection strategy described in Remark~\ref{rem:TEReven}.
\end{proof}

\begin{prop}
If $\pty(n_{1}\cdots n_{d})=1$, then $\nim(\TER(\Gamma))\ne0$.
\end{prop}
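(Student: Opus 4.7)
The plan is to exhibit a winning opening move for the first player by reducing to the setup from the previous proposition. Since every $n_{i}$ is odd and each $n_i\ge 2$, we have $n_i\ge 3$, and the count $e=(n_{1}-2)\cdots(n_{d}-2)$ of interior vertices of $\Gamma$ is odd. By the tensor version of Proposition~\ref{prop:TERmiddle} recorded at the start of this section,
\[
\TER(\Gamma)\cong\TER(N)\,\delay\,\mathsf{D}_{e},
\]
where $N$ is the centrally symmetric tensor obtained from the starting tensor $M$ by replacing the central entry $e$ with $0$. Since $e$ is odd, Corollary~\ref{cor:delay} gives $\nim(\TER(\Gamma))=\nim(\TER(N)\,\delay\,\mathsf{D}_{1})$.

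Next, I claim the first player wins $\TER(N)\,\delay\,\mathsf{D}_{1}$ from its starting position $(N_{\mathrm{start}},1)$ by choosing the delay move, which corresponds on the lattice to the first player selecting any interior vertex of $\Gamma$. This move is legal because $N_{\mathrm{start}}$ is not terminal in $\TER(N)$: every facet sum is positive in the starting tensor. The resulting position $(N_{\mathrm{start}},0)$ has nim-value equal to $\nim(\TER(N))$ by the proposition on nim-values in delayed gamegraphs, and the proof of the preceding proposition already established $\nim(\TER(N))=0$ via the central reflection strategy of Remark~\ref{rem:TEReven} applied to the symmetric, zero-center tensor $N$. Hence the first player has a move from $(N_{\mathrm{start}},1)$ to a position of nim-value $0$, which forces $\nim(\TER(N)\,\delay\,\mathsf{D}_{1})\neq 0$, and therefore $\nim(\TER(\Gamma))\neq 0$.

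The argument is essentially a bookkeeping application of the tools assembled in Sections~3 and~5, with no new strategic analysis required. The only place where care is needed is in invoking $\nim(\TER(N))=0$ for a tensor $N$ of the stated form: that fact was proved for the $d=2$ case in Proposition~\ref{prop:TERmatrixEven} and extended to the tensor case in the proof of the previous proposition, so we may quote it here.
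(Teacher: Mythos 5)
Your proof is correct and takes essentially the same route as the paper: reduce via the tensor version of Proposition~\ref{prop:TERmiddle} and Corollary~\ref{cor:delay} to $\TER(N)\,\delay\,\mathsf{D}_1$, then use $\nim(\TER(N))=0$ (central reflection strategy) to conclude the nim-value is nonzero. You merely make explicit the step the paper leaves implicit, namely that the delay move from $(N_{\mathrm{start}},1)$ to the zero-valued position $(N_{\mathrm{start}},0)$ witnesses $\nim(N_{\mathrm{start}},1)\ne 0$.
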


\begin{proof}
The central entry $e$ of the corresponding tensor $M$ is odd, so $\nim(\TER(\Gamma))=\nim(\TER(N){\delay}\mathsf{D}_{1})\ne 0$ since the second player can win $\TER(N)$ by the central reflection strategy.
\end{proof}

It seems reasonable to expect that $\nim(\TER(\Gamma))=1$ for $\pty(n_{1}\cdots n_{d})=1$.
Unfortunately, even the analysis for $G=P_{3}\Box P_{3}\Box P_{3}$
seems quite complex.

\section*{Acknowledgements}

This material is based upon work supported by the National Science Foundation under Grant No.~DMS-1929284 while the authors were in residence at the Institute for Computational and Experimental Research in Mathematics in Providence, RI, via the Collaborate@ICERM program.

\bibliographystyle{plain}
\bibliography{game}

\begin{thebibliography}{10}

\bibitem{albert2007lessons}
Michael~H. Albert, Richard~J. Nowakowski, and David Wolfe.
\newblock {\em Lessons in {P}lay: {A}n {I}ntroduction to {C}ombinatorial {G}ame
  {T}heory}.
\newblock CRC Press, 2007.

\bibitem{Misha}
Mikhail Baltushkin, Dana~C. Ernst, and N\'andor Sieben.
\newblock Isomorphism theorems for impartial combinatorial games.
\newblock {\em \rm (preprint)}, 2025.

\bibitem{Basic2024}
Bojan Baˇ\!\!\!sić, Paul Ellis, Dana~C. Ernst, Danijela Popović, and
  N\'andor Sieben.
\newblock {Categories of impartial rulegraphs and gamegraphs}.
\newblock {\em Internat. J. Game Theory}, 53(4), 2024.

\bibitem{BeneshGeodetic}
Bret~J. Benesh, Dana~C. Ernst, Marie Meyer, Sarah Salmon, and N\'andor Sieben.
\newblock Impartial geodetic building games on graphs.
\newblock {\em {Int. J. Game Theory}}, 53(4), 2025.

\bibitem{BuckleyHarary85}
Fred {Buckley} and Frank {Harary}.
\newblock {Closed geodetic games for graphs}.
\newblock {Combinatorics, graph theory and computing, Proc. 16th Southeast.
  Conf., Boca Raton/Fla. 1985, Congr. Numerantium 47, 131--138}, 1985.

\bibitem{BuckleyHarary86}
Fred {Buckley} and Frank {Harary}.
\newblock {Geodetic games for graphs}.
\newblock {\em {Quaest. Math.}}, 8:321--334, 1986.

\bibitem{ONAG}
John~Horton Conway.
\newblock {\em On {N}umbers and {G}ames.}
\newblock Natick, MA: A K Peters, 2nd ed. edition, 2001.

\bibitem{FraenkelHarary89}
Aviezri {Fraenkel} and Frank {Harary}.
\newblock {Geodetic contraction games on graphs}.
\newblock {\em {Int. J. Game Theory}}, 18(3):327--338, 1989.

\bibitem{HARARY1984323}
Frank Harary.
\newblock Convexity in graphs: Achievement and avoidance games.
\newblock In M.~Rosenfeld and J.~Zaks, editors, {\em Annals of Discrete
  Mathematics (20): Convexity and Graph Theory}, volume~87 of {\em
  North-Holland Mathematics Studies}, page 323. North-Holland, 1984.

\bibitem{HaynesHenningTiller}
Teresa~W. {Haynes}, Michael~A. {Henning}, and Charlotte {Tiller}.
\newblock {Geodetic achievement and avoidance games for graphs}.
\newblock {\em {Quaest. Math.}}, 26(4):389--397, 2003.

\bibitem{Li}
Shuo-Yen~Robert Li.
\newblock Generalized impartial games.
\newblock {\em Internat. J. Game Theory}, 3:169--184, 1974.

\bibitem{Necascova}
Milena {Ne\v{c}\'askov\'a}.
\newblock {A note on the achievement geodetic games}.
\newblock {\em {Quaest. Math.}}, 12(1):115--119, 1989.

\bibitem{PelayoBook}
Ignacio~M. Pelayo.
\newblock {\em Geodesic {C}onvexity in {G}raphs}.
\newblock SpringerBriefs Math. New York, NY: Springer, 2013.

\bibitem{hypergraph}
N\'andor Sieben.
\newblock Impartial hypergraph games.
\newblock {\em Electron. J. Combin.}, 30(2):Paper No. 2.13, 36, 2023.

\bibitem{SiegelBook}
Aaron~Nathan Siegel.
\newblock {\em Combinatorial {G}ame {T}heory}, volume 146 of {\em Graduate
  Studies in Mathematics}.
\newblock American Mathematical Society, Providence, RI, 2013.

\bibitem{Wang17}
Yue-Li {Wang}.
\newblock {Geodetic contraction games on trees}.
\newblock In {\em Frontiers in {A}lgorithmics. 11th international workshop, FAW
  2017, Chengdu, China, June 23--25, 2017. Proceedings}, pages 233--240. Cham:
  Springer, 2017.

\end{thebibliography}

\end{document}